\newtheorem{theorem}{Theorem}
\newtheorem{lemma}[theorem]{Lemma}
\newcommand{\toas}{\xrightarrow{\rm{a.s.}}}
\newcommand{\toip}{\xrightarrow{p}}
\newcommand{\hf}{\textstyle{\frac{1}{2}}}
\newcommand{\E}{\mathbb{E}}
\newcommand{\mP}{\mathbb{P}}
\newcommand{\mR}{\mathbb{R}}
\DeclareMathOperator{\tr}{tr}
\DeclareMathOperator{\diag}{diag}
\newcommand{\lhat}{\hat{\ell}}
\newcommand{\lmin}{\lambda_{\rm \min}}
\newcommand{\rhonn}{\rho_{\nu n}}
\newcommand{\ssf}{\mathsf{F}}
\newcommand{\ssm}{\mathsf{m}}
\newcommand{\bx}{\bm{x}}
\newcommand{\by}{\bm{y}}
\def\bb{\boldsymbol}
\def\var{{\rm Var}}
\def\tr{\, {\rm tr}\,}
\def\var1{{\sigma_1^2}}
\def\E{{\mathbb E}}
\def\Pr{{\mathbb P}}
\begin{document}


\title{Asymptotics of eigenstructure of sample correlation matrices for high-dimensional spiked models}

\author{
	David Morales-Jimenez\footnote{ECIT Institute, Queen's University Belfast, UK; email: d.morales@qub.ac.uk}
	\and Iain M. Johnstone\footnote{Department of Statistics, Stanford University, USA; email: imj@stanford.edu, jeha@stanford.edu}
	\and Matthew R. McKay\footnote{ECE Department, Hong Kong University of Science and Technology, Hong Kong; email: m.mckay@ust.hk}
	\and Jeha Yang\footnotemark[2]
}

\date{\today}

\maketitle

\abstract{Sample correlation matrices are widely used, but 
surprisingly little is known about their asymptotic spectral
properties for high-dimensional data beyond the case of
``null models'', for which the data is assumed
to have independent coordinates.
In the class of spiked  models, we apply random
matrix theory to derive asymptotic first-order and distributional
results for both the leading eigenvalues and eigenvectors of sample
correlation matrices, assuming a high dimensional regime in which the
ratio $p/n$, of number of variables $p$ to sample size $n$, converges
to a positive constant.
While the first order spectral properties of sample correlation matrices
match those of sample covariance matrices,
their asymptotic distributions can differ significantly. Indeed, the
correlation-based fluctuations of both sample eigenvalues and
eigenvectors 
are often remarkably smaller than those of their sample covariance
counterparts.



}

\section{Introduction}

Estimation of correlation matrices is among the most fundamental
statistical tasks,  basic to standard methods and widely used
in applications.
Modern examples include viral sequence analysis and
vaccine design in biology \citep{Dahirel11, Quadeer14, Quadeer18},
large portfolio design in finance \citep{Plerou02}, signal detection in
radio astronomy \citep{Lesham01}, and collaborative filtering
\citep{Liu14, Ruan16}, among many others.  In classical statistical
settings, with a limited number of variables $p$ and a
large sample size $n$, the sample correlation matrix
performs well.  Modern applications like those cited
are however often defined by high dimensionality, with
large $p$ and, in many cases,  limited $n$.
In such high-dimensional cases, sample correlation
matrices become inaccurate due to the aggregation of statistical noise
across the matrix coordinates; a fact that is evident from the
eigen-spectrum \citep{ElKaroui2009}. This is particularly important in
the context of Principal Component Analysis (PCA), which often
involves projecting data onto the leading eigenvectors of the sample
correlation matrix; or equivalently, onto those of the sample covariance
matrix after standardisation of the data.

Despite the extensive use of sample correlation matrices, 
there is relatively little theoretical understanding of
their properties, and particularly of their eigen-spectra, in high
dimensions. In contrast, sample covariance matrices
have been studied extensively, and 
a rich  literature now exists (see e.g.,
\cite{yao15}).
Their asymptotic properties have typically been described
in the high-dimensional setting where the number of samples and
variables both grow large, often though not always at the same rate,
based on the theory of random matrices.
Specific first and second order results
for the eigenvalues and eigenvectors
of sample covariance matrices
are reviewed in books by \citet{BaiSilverstein, Couillet11} and \citet{yao15}.


For the spectra of high-dimensional sample \textit{correlation}
matrices, current theoretical results mainly focus on the simplest ``null
model'' scenario, in which the data is assumed independent.
In this null model, 
correlation matrices share many of the same asymptotic properties as
 covariance matrices from independent and identically
distributed (i.i.d.), zero-mean, unit variance data.
Thus, the empirical eigenvalue distribution converges to the
Marchenko-Pastur distribution almost surely \citep{Jiang04}, 
while the largest and
smallest eigenvalues converge to the edges of this distribution
\citep{Jiang04, Xiao2010}.   
Moreover, the rescaled largest and smallest eigenvalues 
asymptotically follow the Tracy-Widom law \citep{bao2012, pillai2012}.
CLTs for linear spectral statistics have also been derived
\citep{Gao14}.
A separate line of work studies the maximum absolute off-diagonal
entry of sample correlation matrices, referred to as ``coherence''
\citep{jiang2004,cai2011,Cai11},  proposed as a
statistic for conducting independence tests, see also
\citet{Cochran95, Mestre17}  and references therein.
\citet{hero2011,hero2012} use a related
statistic to identify variables exhibiting strong
correlations; an approach referred to as ``correlation screening''.

For non-trivial correlation models however, asymptotic  results
about the spectra of sample correlation matrices are quite
scarce. 
Notably, \cite{ElKaroui2009} shows for a fairly general
class of covariance models with bounded spectral norm that, to first order,
the eigenvalues of sample correlation matrices asymptotically coincide
with those of sample covariance matrices with unit-variance data,
generalizing earlier results of \cite{Jiang04} and \cite{Xiao2010}. Under
similar covariance assumptions, recent work has also presented CLTs
for linear spectral statistics of sample correlation matrices
\citep{Mestre17}, extending the work of \cite{Gao14}.
While first order behavior again coincides with that of
sample covariances, the asymptotic
fluctuations however are very different in the sample correlation
case.

\begin{figure*}[h!]
	\centering
	\begin{subfigure}[b]{0.95\textwidth}
		\centering
		\caption{Histogram of the largest sample eigenvalue}
		\includegraphics[width=\textwidth]{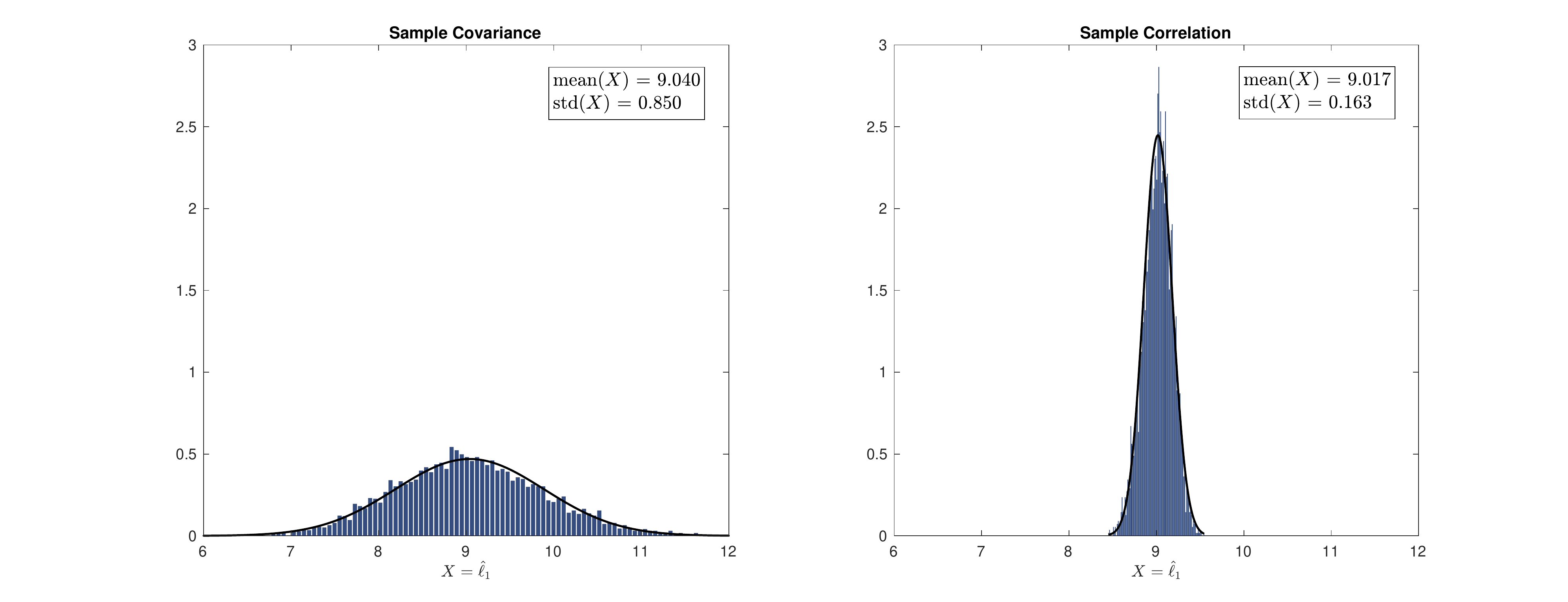}
		\label{fig_motivation_a}
	\end{subfigure}

	\begin{subfigure}[b]{0.95\textwidth}
	\centering
    \caption{Scatter plot of sample-to-population eigenvector projections}
	\includegraphics[width=\textwidth]{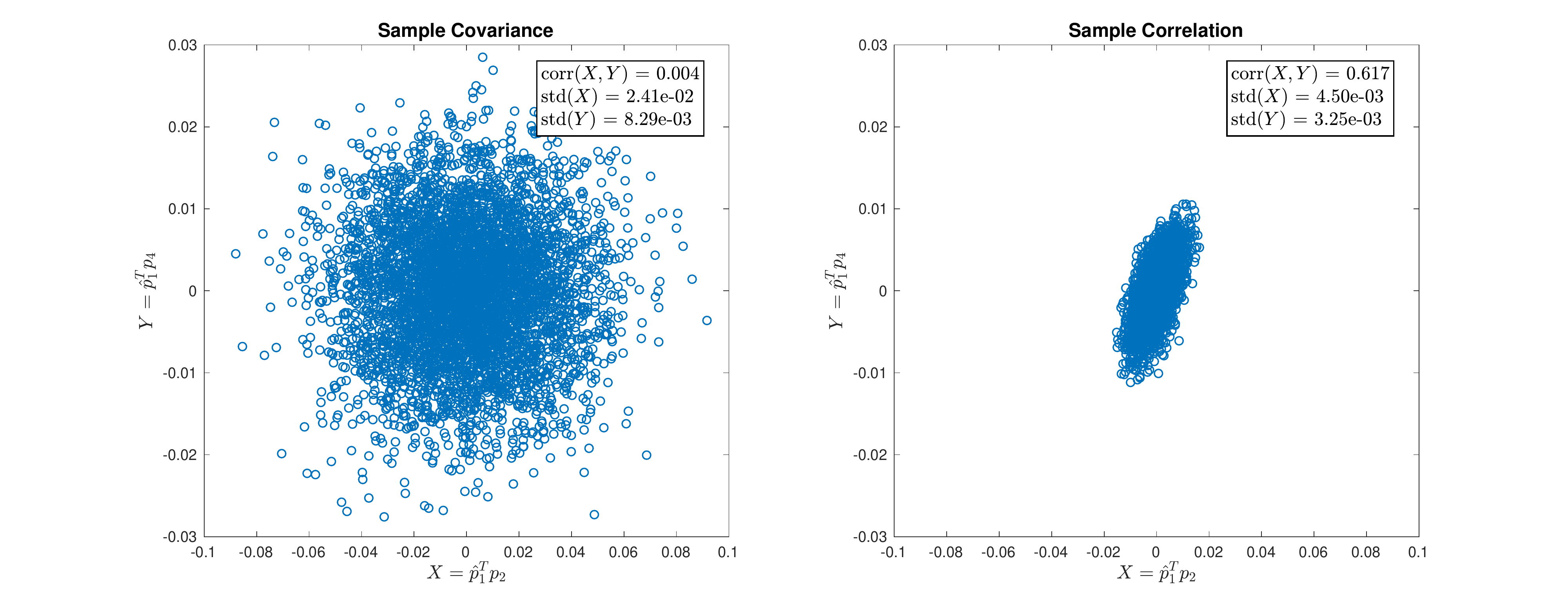}
	\label{fig_motivation_b}
\end{subfigure}
	\caption{A simple simulation example shows remarkable
          distributional differences between the sample covariance and
          sample correlation. From $n=200$ i.i.d. Gaussian samples
          $x_i \in  \mathbb{R}^{100}$ with covariance $\Sigma = {\rm
            blkdiag}(\Sigma_s,I_{90})$, where 
	        $(\Sigma_s)_{i,j=1}^{10} = ( r^{|i-j|} )_{i,j=1}^{10}$ , $r=0.95$, we compute the sample covariance
          and sample correlation and show: (a) the empirical density
          (normalized histogram) of the largest sample eigenvalue,
along with a Gaussian distribution with the estimated mean and
variance (solid line)           
          and (b) scatter plot of the leading sample eigenvector, projected onto the second (x-axis) and fourth (y-axis) population eigenvectors. A striking variance reduction is observed in the sample correlation for both (a) and (b). A similar variance reduction is observed for different choices of the population eigenvectors in (b); the selected choice (second and fourth eigenvectors) facilitates the illustration of an additional correlation effect in the sample-to-population eigenvector projections.}
	\label{fig_motivation}
\end{figure*}


This paper considers a particular class of correlation matrix
models; the so-called ``spiked models'', in which
a few large or small eigenvalues of the population
covariance (or correlation) matrix are assumed to be well separated
from the rest \citep{Johnstone01}.
Spiked covariance models are relevant for applications in
which the primary covariance information lies in a
relatively small number of eigenmodes; for example,  in collaborative
signal detection in cognitive radio systems \citep{Bianchi09},  fault
detection in sensor networks \citep{couillet2013}, adaptive beamforming
in array processing \citep{Hachem13,Vallet15, Yang18}, and protein
contact prediction in biology \citep{Cocco11,Cocco13}.  The spectral
properties of spiked covariance models have now been well studied,
with precise analytical results established for the asymptotic
first-order and distributional properties of both the eigenvalues and
eigenvectors, e.g., \cite{baik2005,baik-silver2006, Paul2007, bai2008,
  Benaych11, couillet2013,Bloemendal2016}.
For reviews, see also \citet[Chapter
9]{Couillet11} and \citet[Chapter 11]{yao15}.

Results for
the spectrum of sample correlation matrices under spiked models are
however far more scarce. While the asymptotic first-order behaviour is
expected to coincide with that of the sample covariance, as a
consequence of \cite{ElKaroui2009}, a simple simulation example
reveals striking differences in the fluctuations of both the sample
eigenvalues and eigenvectors; see Figure \ref{fig_motivation}.

Here, we present theoretical results to describe these observed
phenomena.
We obtain asymptotic first-order and distribution results
for the eigenvalues and eigenvectors of sample correlation matrices
under a spiked model.   In essence, analogues of the set of theorems established by
\cite{Paul2007} for sample covariance matrices in the special case of 
Gaussian data are here presented for sample correlation matrices and generalized to non-Gaussian data.
To first order, the eigenvalues and eigenvectors coincide
asymptotically with those of sample covariance matrices; however,
their fluctuations can be very different.  
Indeed, for both the largest sample correlation eigenvalues, Theorem
\ref{thm_main}, and projections
of the corresponding eigenvectors, Theorem \ref{thm_main_eigenvec}, 
the asymptotic variances 
admit a  decomposition into
three terms.  
The first term is just the asymptotic variance for sample
covariance matrices generated from Gaussian data; the second
adds corrections due to non-Gaussianity, while the
third captures further corrections due to data normalization imposed
by the sample correlation matrix.
(This last amounts to normalizing
the entries of the sample covariance matrix by the sample
variances). 
Consistent with the example of Figure \ref{fig_motivation_a},
in the CLT for the leading sample
eigenvalues, the sample correlation eigenvalues often
show lower fluctuations---despite the variance normalization---than 
those of the sample covariance eigenvalues. 
As seen in Figure \ref{fig_motivation_b}, the (normalized)
eigenvector projections
are typically asymptotically correlated, even for
Gaussian data, unlike the sample covariance setting of 
\citet[Theorem 5]{Paul2007}.



\medskip
{\em Technical contributions:} We build on and extend
a set of random
matrix tools developed in the literature
to study spiked covariance
models. The latter are recalled and organized in a companion manuscript
\citep{Iain}, which gives a parallel treatment for sample covariance
matrices. 
Important adaptations are needed here to account for
the data normalization imposed by sample correlation matrices. Key
technical contributions of our work, basic to our main
theorems, are asymptotic first-order and distributional properties
for bilinear forms and matrix quadratic forms with
normalized entries, Section \ref{sec:normalized}.
A novel regularization-based proof strategy is used to establish the
inconsistency of eigenvector projections in the case of
``subcritical'' spiked eigenvalues, Theorem
\ref{thm_main_eigenvec_subcritical}.

\medskip
\textbf{Model {\rm M}.} \ 
Let $x \in \mathbb{R}^{m+p}$ be a random vector with entries having finite $(4+\delta)$th moment for some $\delta > 0$ and consider the partition
\begin{equation*}
x = \begin{bmatrix}
	\, \xi \, \\
	\eta
	\end{bmatrix} .
\end{equation*}
Assume that $\xi \in  \mathbb{R}^{m}$ has zero mean and covariance $\Sigma$, and is independent of $\eta \in \mathbb{R}^{p}$, which has components $\eta_i$ that are i.i.d. with zero mean and unit variance.
Let $\Sigma_D = {\rm diag}(\sigma_1^2,\ldots,\sigma_m^2)$ be the diagonal matrix containing the variances of $\xi_i$, and let $\Gamma = \Sigma_D^{-1/2} \, \Sigma \, \Sigma_D^{-1/2}$ be the correlation matrix of $\xi$ with eigen-decomposition $\Gamma = P L P^T$, where $P=[p_1,\ldots,p_m]$ is the eigenvector matrix, while $L = {\rm diag}(\ell_1,\ldots,\ell_m)$ contains the spike correlation eigenvalues 
$\ell_1 \geq \ldots \geq \ell_m > 0$.

The correlation matrix of $x$ is therefore $\Gamma_x = {\rm blkdiag}(\Gamma, I)$, with eigenvalues $\ell_1, \ldots, \ell_m, 1, \ldots, 1$ and corresponding eigenvectors ${\mathfrak p}_1, \ldots, {\mathfrak p}_m, e_{m+1}, \ldots, e_{m+p}$, where ${\mathfrak p}_i =  [p_i^T \,\, 0_p^T]^T$ and $e_j$ is the $j$th canonical vector (i.e., a vector of all zeros, but one in the $j$th coordinate).



Consider a sequence of i.i.d. copies of $x$, the first $n$ forming
columns of the
$(m+p) \times n$ data matrix $X = (x_{ij})$.
We will assume that $m$ is fixed, while $p$ and $n$ grow large with 
\begin{equation*}
\gamma_n = p/n \to \gamma > 0 \qquad  {\rm as} \quad p,n \to \infty.
\end{equation*}



\medskip
\textbf{Notation.} \ 
Let $S = n^{-1} XX^T$ be the sample covariance matrix and $S_D={\rm diag}(\hat \sigma_1^2,\ldots,\hat \sigma_{m+p}^2)$ the diagonal matrix containing the sample variances. Denote by $R = S_D^{-1/2} \, S \, S_D^{-1/2}$  the sample correlation matrix, with corresponding $\nu$th sample eigenvalue and eigenvector satisfying
\begin{equation*}
R \, \hat {\mathfrak p}_\nu = \hat \ell_{\nu}  \hat {\mathfrak p}_\nu ,
\end{equation*} 
where, for later use, we partition $\hat {\mathfrak p}_\nu = [\hat p_\nu^T ,  \hat v_\nu^T ]^T$, with $\hat p_\nu$ the subvector of $\hat {\mathfrak p}_\nu$ restricted to the first $m$ coordinates.

For $\ell > 1+\sqrt{\gamma}$, define
\begin{equation*}
\rho(\ell,\gamma) = \ell + \gamma \frac{\ell}{\ell-1} \, , \qquad \dot \rho(\ell,\gamma) = \frac{\partial \rho(\ell,\gamma)}{\partial \ell} =  1 - \frac{\gamma}{(\ell-1)^2} .
\end{equation*}
For an index $\nu$ for which $\ell_{\nu} > 1+\sqrt{\gamma}$ is a simple eigenvalue,  set
\begin{equation} \label{rho_nu_def}
\rho_\nu = \rho(\ell_\nu, \gamma) \, , \qquad \rho_{\nu n} = \rho(\ell_\nu, \gamma_n) \, , \qquad \dot \rho_\nu = \dot \rho(\ell_\nu,\gamma) \, , \qquad \dot \rho_{\nu n} = \dot \rho(\ell_\nu,\gamma_n) .
\end{equation}
We refer to eigenvalues satisfying $\ell_{\nu} > 1+\sqrt{\gamma}$ as ``supercritical'', and those satisfying $\ell_{\nu} \leq 1+\sqrt{\gamma}$ as ``subcritical'', with the quantity $1+\sqrt{\gamma}$ referred to as the ``phase transition''. 

To describe and interpret the variance terms in the limiting
distributions to follow, we make some definitions. Let
$\bar{\xi}_i  = \xi_i/\sigma_i$ and $\kappa_{ij}  = \E \bar{\xi}_i \bar{\xi}_j$
denote the scaled components of $\xi$ and their covariances; of course
$\kappa_{ii}=1$.
The corresponding scaled fourth order cumulants are
\begin{equation} \label{kappacum}
 \kappa_{iji'j'} = \E [ \bar{\xi}_i \bar{\xi}_j \bar{\xi}_{i'}
\bar{\xi}_{j'} ] - \kappa_{ij}\kappa_{i'j'} -
\kappa_{ij'}\kappa_{ji'}- \kappa_{ii'}\kappa_{jj'}.
\end{equation}
When $\xi$ is Gaussian, $\kappa_{i j i' j'} \equiv 0$.

The effect of variance scaling in the correlation matrix will be described by further
quadratic functions of $(\bar{\xi}_i)$ defined by
\begin{align}
\chi_{ij} & = \bar{\xi}_i \bar{\xi}_j,  \qquad \qquad  
\psi_{ij}  = \kappa_{ij} (\bar{\xi}_i^2 +
\bar{\xi}_j^2)/2 \label{chipsidef} \\         
\check{\kappa}_{iji'j'} & = {\rm Cov}(\psi_{ij},\psi_{i'j'}) - {\rm
	Cov}(\psi_{ij},\chi_{i'j'}) - {\rm Cov}(\chi_{ij},\psi_{i'j'}) \; .
\label{kcheckdef}
\end{align}

{\emph{Tensor notation:}} For convenience, it is useful to consider $\kappa_{i j i' j'}$ and $\check{\kappa}_{i j i' j'}$ as entries of 4-dimensional tensor arrays $\kappa$ and $\check{\kappa}$ respectively, and define an additional array ${\cal P}^{\mu \mu' \nu \nu'}$ with entries $p_{\mu,i} p_{\mu',j} p_{\nu,i'} p_{\nu',j'}$.  Also, define ${\cal P}^{\nu}$ as ${\cal P}^{\nu \nu \nu \nu}$.  
Finally, for a second array $A$ of the same dimensions,
\begin{equation*} \label{eq:5}
[\mathcal{P}^\nu,A] = \sum_{i,j,i',j'} P^\nu_{iji'j'} A_{iji'j'}.
\end{equation*}

\section{Main results} \label{sec:MainResults}



Our first main result, proved in Section \ref{arguments}, gives the
asymptotic properties of the largest (spike) eigenvalues of the sample
correlation matrix: 

{\theorem \label{thm_main}
Assume Model {\rm M}, and that $\ell_{\nu} > 1+\sqrt{\gamma}$ is a simple eigenvalue. As $p/n \to \gamma > 0$,
 \begin{align}
&(i) \qquad \,\, \hat \ell_{\nu} \toas \rho_\nu \, , \label{eq:SuperSpkEVal}  \\
&(ii) \qquad \sqrt{n}  ( \hat \ell_{\nu} - \rho_{\nu n} ) \xrightarrow{\cal D} N(0,\tilde \sigma_{\nu}^2) \nonumber ,
 \end{align}
 where
  \begin{equation} \label{sigma_thm}
    \tilde \sigma_{\nu}^2 = 2 \dot \rho_\nu \ell_{\nu}^2
    + \dot \rho_\nu^2 [\mathcal{P}^\nu,  \kappa] 
    + \dot \rho_\nu^2 [\mathcal{P}^\nu, \check{\kappa}].
 \end{equation}
}
Centering at $\rho_{\nu n}$ rather than at $\rho_{\nu}$ is important. If, for example, $\gamma_n = \gamma + an^{-1/2}$, then
\begin{equation*} 
\sqrt{n}  ( \hat \ell_{\nu} - \rho_{\nu} ) \xrightarrow{\cal D} N( a \ell_\nu (\ell_{\nu}-1)^{-1} ,\tilde \sigma_{\nu}^2) ,
\end{equation*}
and we see a limiting shift.
Furthermore, it may also be beneficial to consider $\tilde \sigma_{\nu n}^2$ instead of $\tilde \sigma_{\nu}^2$, obtained by replacing $\dot \rho_{\nu}$ with $\dot \rho_{\nu n}$ in \eqref{sigma_thm}, so that
\begin{equation*} 
	\sqrt{n}  ( \hat \ell_{\nu} - \rho_{\nu n} ) / \tilde \sigma_{\nu n} \xrightarrow{\cal D} N( 0 ,1) .
\end{equation*}

The asymptotic first-order limit in (\emph{i}), which follows  as an easy consequence of \cite{ElKaroui2009}, coincides with that of the $\nu$-th largest eigenvalue of a sample covariance matrix computed from data with population covariance $\Gamma$ \citep{Paul2007}.   This implies that, when constructing $R$, normalizing by the sample variances has no effect on the leading eigenvalues, at least to first order.  

Key differences are seen however when looking at the asymptotic
distribution, given in (\emph{ii}), and in the variance formula
\eqref{sigma_thm} in particular.  This can be readily interpreted. The
first term corresponds to the variance in the Gaussian-covariance case
of \cite{Paul2007}, again for samples with covariance $\Gamma$.
The second provides a correction of that result for
non-Gaussian data, see the companion article \cite{Iain}. 
The third term describes the contribution specific to sample correlation matrices, representing the effect of normalizing the data by the sample variances.  
It is often negative, and is evaluated explicitly for Gaussian data in
Corollary \ref{cor_Gaussian} below, proved in Appendix
\ref{cor_Gaussian_proof}.

{\corollary \label{cor_Gaussian}
For $\xi$ Gaussian, the asymptotic variance in Theorem~\ref{thm_main} simplifies to
\begin{equation*} 
\tilde \sigma_{\nu}^2 =   2  \ell_{\nu}^2 \dot \rho_\nu \left[ 1 - \dot \rho_\nu \left( 2 \ell_\nu  \tr P_{D,\nu}^4   - \tr (P_{D,\nu} \Gamma P_{D,\nu})^2    \right)  \right] ,
\end{equation*}
where $P_{D,\nu} = {\rm diag}(p_{\nu,1},\ldots,p_{\nu,m})$.
}

Thus, computing the sample correlation results in the asymptotic
variance being scaled by \\ $1-\dot \rho_{\nu} \Delta_\nu$
relative to the sample covariance, where
\begin{equation*}  \label{var_Gaussian}
\Delta_\nu = 2 \ell_\nu  \tr P_{D,\nu}^4   - \tr (P_{D,\nu} \Gamma P_{D,\nu})^2 = 2 \ell_{\nu} \sum_i p_{\nu,i}^4 -  \sum_{i, j} (p_{\nu,i} \, \kappa_{i j} \, p_{\nu,j})^2
\end{equation*}
is often positive, implying that spiked eigenvalues of the sample correlation often exhibit a smaller variance than those of the sample covariance. Indeed, such variance reduction occurs iff
\begin{equation}  \label{var_Gaussian_comp}
\sum_{i, j} (p_{\nu,i} \, \kappa_{i j} \, p_{\nu,j})^2 < 2 \ell_{\nu} \sum_i p_{\nu,i}^4 = \sum_{i, j} p_{\nu,i} \, \kappa_{i j} \, p_{\nu,j} (p_{\nu,i}^2 + p_{\nu,j}^2),
 \end{equation}
with the last identity following from the fact that $\ell_{\nu}
p_{\nu,i} = \sum_j \kappa_{ij} \, p_{\nu,j}$.
Condition \eqref{var_Gaussian_comp}, and variance reduction, holds in the
following cases:

\medskip
\begin{tabular}[h]{ll}
  \qquad \qquad (i) & both $\Gamma$ and $p_\nu$ have non-negative entries, or \\
  \qquad \qquad (ii) & $2 \ell_\nu \sum_i p_{\nu,i}^4 > 1$, or \\
  \qquad \qquad (iii) & $ 2 \ell_\nu > \ell_1^2$.
\end{tabular}
\medskip

In case (i),  the inequalities $0 \leq p_{\nu,i} \kappa_{i
  j}  p_{\nu,j} \leq 2 p_{\nu,i} p_{\nu,j} \leq  p_{\nu,i}^2 +
p_{\nu,j}^2$ yield \eqref{var_Gaussian_comp}. 
Note that if $\Gamma$ has non-negative entries, then the Perron-Frobenius
theorem establishes the existence of an eigenvector with non-negative
components for $\ell_1$ ; 
further, if $\Gamma$ has positive entries, by the same theorem,
$\ell_1$ is simple associated with an eigenvector with positive components. Case (ii) follows from 
$\sum_{i, j} (p_{\nu,i} \, \kappa_{i j} \, p_{\nu,j})^2 \leq \sum_{i, j} (p_{\nu,i} \, p_{\nu,j})^2 = 1,$ 
and holds if $\ell_{\nu} > m/2$ since $ \sum_i
p_{\nu,i}^4 \geq 1/m$.
Case (iii) follows from the inequalities
$2 p_{\nu,i}^2 p_{\nu,j}^2 \leq p_{\nu,i}^4 + p_{\nu,j}^4$ and
$\sum_j \kappa_{ij}^2 = (\Gamma^2)_{ii} \leq \|\Gamma^2\| = \ell_1^2$.
Note that this is rather special, in that it has nothing to do with eigenvectors, and a necessary condition for
it to hold is $\ell_1 \leq 2$.


Condition \eqref{var_Gaussian_comp} can fail, however. 
For example, for even $m$ and $r\in(0,1)$, 
consider 
\begin{equation*}
  \Gamma =
  \begin{pmatrix}
    1 & -r \\ -r & 1
  \end{pmatrix}
  \otimes
  1_{m/2} 1_{m/2}^T,
\end{equation*}
where $1_{m/2}$ is the $(m/2)$-dimensional vector of all ones, which corresponds to two negatively correlated groups of identical random vectors.
This has simple supercritical eigenvalues $\ell_1 = (1+r)m/2$ and $\ell_2 = (1-r)m/2$
when $m > 2 (1 + \sqrt{\gamma}) / (1-r)$, 
with $p_{\nu,i}^2 = m^{-1}$ for $\nu = 1, 2$.
One finds that $\Delta_2 = (1-2r-r^2)/2 < 0$ for $r > \sqrt{2} - 1$, 
although 
 $\Delta_1> 0$  since $\ell_1 > m/2$ which implies case (ii).




\bigskip

Turn now to the eigenvectors.  Again, fix an index $\nu$ for which
$\ell_{\nu} > 1 + \sqrt{\gamma}$ is a simple eigenvalue of $\Gamma$,
with corresponding eigenvector ${\mathfrak p}_\nu = [p_\nu^T \,\,
0_p^T]^T$.
Recall that $\hat {\mathfrak p}_\nu = [\hat p_\nu^T \,\, \hat
v_\nu^T]^T$ is the $\nu$th sample eigenvector of $R$ and let $a_\nu = \hat
p_\nu / \| \hat p_\nu \|$  be the corresponding normalized subvector of
$\hat {\mathfrak p}_\nu$ restricted to the first $m$ coordinates. The
next result establishes a limit for the eigenvector projection
$\langle \hat {\mathfrak p}_\nu , {\mathfrak p}_\nu \rangle $, and a
CLT for the normalized cross projections $P^T a_\nu = [ p_1^T a_\nu ,
\ldots ,  p_m^T a_\nu ]^T$; see Sections  \ref{sec:EvecInconsistency}
and \ref{sec:EvecFluc}. 


{\theorem \label{thm_main_eigenvec}
	Assume Model {\rm M}, and that $\ell_{\nu} > 1+\sqrt{\gamma}$ is a simple eigenvalue. Then, as $p/n \to \gamma > 0$,
	\begin{align*}
	&(i) \qquad \,\,   \langle \hat {\mathfrak p}_\nu , {\mathfrak p}_\nu \rangle ^2  \toas \dot \rho_\nu \ell_{\nu} / \rho_{\nu}    \, ,  \\
	&(ii) \qquad \sqrt{n}  ( P^T a_\nu  - e_{\nu} ) \xrightarrow{\cal D} N(0,\Sigma_\nu)  ,
	\end{align*}
	where $\Sigma_\nu = {\cal D}_\nu \tilde \Sigma_\nu  {\cal D}_\nu$ with
    \begin{align}
    {\cal D}_\nu &=  \sum_{k \neq \nu}^{m} (\ell_{\nu}-\ell_k)^{-1}
                   e_k e_k^T   \label{Dnudef} \\
      \tilde \Sigma_{\nu, k l }
                 &=   \dot \rho_{\nu}^{-1}  \ell_k \ell_{\nu} \,
                   \delta_{k,l} +  [\mathcal{P}^{k \nu l \nu},
                   {\kappa} ] + [\mathcal{P}^{k \nu l \nu},
                   \check{\kappa}], \label{Sigma-til}
	\end{align}
where $\delta_{k, l} = 1$ if $k= l$ and $0$ otherwise.
}

The CLT result in $(ii)$ can be rephrased in terms of the entries of $a_\nu$, for which we readily obtain $\sqrt{n}  ( a_\nu  - p_{\nu} ) \xrightarrow{\cal D} N(0,P \Sigma_\nu P^T)$; note that $\Sigma_{\nu}$ has zeros in the $\nu$th row and the $\nu$th column.

As for the eigenvalues, Theorem \ref{thm_main_eigenvec} shows that the
spiked eigenvectors of sample correlation matrices exhibit the same
first-order behavior as those of the sample covariance
\citep{Paul2007}. The difference again lies in the asymptotic
fluctuations, captured by the covariance matrix $\Sigma_\nu$.  Note
that this is decomposed as a product of ${\cal D}_\nu$---a diagonal
matrix---and the matrix $\tilde \Sigma_\nu$, which involves the three
terms in \eqref{Sigma-til}.  These terms have similar interpretations
as those discussed previously in \eqref{sigma_thm}.  That is, the
first term captures the asymptotic fluctuations for a
Gaussian-covariance model \citep{Paul2007}, the second term captures
the effect of non-Gaussianity in the covariance case \citep{Iain}, and
the third term captures information specific to the correlation case,
representing fluctuations due to sample variance normalization.   Note
that only the first term is diagonal in general, suggesting that the
eigenvector projections may be asymptotically correlated,
as seen earlier in Figure \ref{fig_motivation_b}, right panel.
This holds
also for Gaussian data, evaluated explicitly in Corollary
\ref{cor_eigenvec_Gaussian} below, see Appendix
\ref{cor_eigenvec_Gaussian_proof}.
We note an interesting contrast with 
the eigenvector projections for covariance matrices
\citep{Paul2007}, described only by the leading term in \eqref{Sigma-til}.   


{\corollary \label{cor_eigenvec_Gaussian}
For $\xi$ Gaussian, the asymptotic covariance in Theorem~\ref{thm_main_eigenvec} reduces to $\Sigma_\nu = {\cal D}_\nu \tilde \Sigma_{\nu}   {\cal D}_\nu$,
	\begin{equation*}  \label{covar_Gaussian}
	\tilde \Sigma_\nu =  \frac{\ell_{\nu}}{\dot \rho_\nu} L + (\ell_{\nu}I +L) \, \left( \tfrac12 {\cal Z} - \ell_{\nu} {\cal Y} \right) \, (\ell_{\nu}I +L) + \ell_{\nu} (\ell_{\nu}^2 {\cal Y} - L {\cal Y} L)
	\end{equation*}
	with ${\cal Z} =  P^T P_{D,\nu} (\Gamma \circ  \Gamma) P_{D,\nu} P$ and ${\cal Y} = P^T P_{D,\nu}^2 P$, and $\circ$ denoting Hadamard product. 
}

Thus, for Gaussian data, the entries of the asymptotic covariance matrix have (for $k, l \neq \nu$) 
\begin{align*} 
 \Sigma_{\nu, k l}  &= (\ell_{\nu}-\ell_k)^{-1} (\ell_{\nu}-\ell_l)^{-1}  
\left[ \frac{\ell_{\nu}}{\dot \rho_\nu} \ell_k \delta_{k, l}  +  (\ell_{\nu}+\ell_k) (\ell_{\nu}+\ell_l) \frac{{\cal Z}_{k l}}{2} - \ell_\nu \left( \ell_{\nu} (\ell_k + \ell_l) + 2 \ell_k \ell_l  \right) {\cal Y}_{k l} \right]  .
\end{align*}

\bigskip

Consider now the subcritical case in which $\nu$ is such that $1 <
\ell_{\nu} \leq 1+\sqrt{\gamma}$. Denote by ${\mathfrak p}_\nu$ the
corresponding population eigenvector, and $\hat{\ell}_\nu$ and $\hat
{\mathfrak p}_\nu$ the corresponding sample eigenvalue and eigenvector
respectively.  With proofs deferred to Sections \ref{sec:Prelim} and
\ref{sec_eigenvector_subcritical}, we have the following result:  

{\theorem \label{thm_main_eigenvec_subcritical}
	Assume Model {\rm M} and $1 < \ell_{\nu} \leq 1+\sqrt{\gamma}$ is a simple eigenvalue. Then, as $p/n \to \gamma > 0$,
	\begin{align*}
	&(i) \qquad \,\, \hat \ell_{\nu} \toas (1 + \sqrt{\gamma})^2 \, ,  \\
	&(ii) \qquad \,\, 	 \langle \hat {\mathfrak p}_\nu , {\mathfrak p}_\nu \rangle ^2  \toas 0 .
	\end{align*}
}
Once again, the asymptotic first-order limits of the sample eigenvalue
and its associated eigenvector are the same as those obtained for the
sample covariance \citep {Paul2007}. 

To summarize, for the spiked eigenvalues and eigenvectors in both
supercritical and subcritical cases, our results confirm that the
first-order asymptotic behaviour is indeed equivalent to that of
sample covariance matrices, in  agreement with previous results and
observations \citep{ElKaroui2009, Mestre17}.  While the eigenvalue
limits in Theorem \ref{thm_main} and Theorem
\ref{thm_main_eigenvec_subcritical} follow as a straightforward
consequence of \cite{ElKaroui2009}, the eigenvector results of Theorem
\ref{thm_main_eigenvec}-($i$) and Theorem
\ref{thm_main_eigenvec_subcritical}-($ii$) do not.
In contrast to the first-order
equivalences, important differences arise in the fluctuations of both
the eigenvalues and eigenvectors, as shown by the asymptotic
distributions of Theorem \ref{thm_main}-$(ii)$ and Theorem
\ref{thm_main_eigenvec}-$(ii)$.

We illustrate these
differences with a simple example having
covariance $\Gamma = (1-r) I_m + r 1_m 1_m^T$, where $r \in [0, 1]$
; i.e., a model with unit variances
and constant correlation $r$ across all components.  Moreover, $\xi$
is assumed to be Gaussian for simplicity. In this particular setting,
$L = \diag(\ell_1, 1-r, \ldots, 1-r)$, where $\ell_1 = 1 + r(m-1)$ is
supercritical iff $r  > \sqrt \gamma / (m-1)$.
Consider the largest sample eigenvalue $\hat \ell_1$ in such a
supercritical case.
From Corollary \ref{cor_Gaussian}, the asymptotic variances for the
sample covariance and the sample correlation can be respectively computed, yielding
\begin{equation*}
\sigma_{1}^2 = 2 \ell_1^2 \dot \rho_1, \qquad
\tilde{\sigma}_1^2 = \sigma_1^2 (1 - \dot \rho_1 \Delta), \qquad
\end{equation*}
with $\Delta = 2 \ell_1 \tr P_D^4 - \tr (P_D \Gamma P_D)^2$, and where
\begin{equation*}
P_D \triangleq P_{D,1} = m^{-1/2} I_{m}, \qquad
\dot \rho_1 = 1 - \frac{\gamma}{r^2 (m-1)^2} .
\end{equation*}
Figure \ref{fig_single_sector_a} plots these asymptotic variances
versus $r$ for various $(\gamma, m)$.
Indeed the variance (fluctuations) in the sample
correlation are consistently smaller than for the covariance
counterpart. The difference is striking, 
becoming extremely large as $r \nearrow 1$. Similar trends are
observed for various choices of $m$ and $\gamma$,
being more pronounced for higher $m$, while not
much affected by varying $\gamma$.
This may be understood from, after writing
$\Delta = r(2-r) + (1-r)^2 m^{-1}  = 1- (1-r)^2 ( 1 - m^{-1})$,
\begin{equation*}
\frac{\tilde{\sigma}_1^2}{\sigma_1^2}
= 1 - \dot \rho_1 \Delta  
\to
\begin{cases}
\dfrac{\gamma}{(m-1)^2} &   \quad \text{as} \quad r \to 1, \ m
\text{ fixed} \\
(1-r)^2 &   \quad \text{as} \quad m \to \infty, \ r
\text{ fixed}.
\end{cases}
\end{equation*}

\begin{figure}[t] 
	\centering
	\begin{subfigure}[b]{0.9\textwidth}
		\centering
		\caption{Largest sample eigenvalue $\hat \ell_1$}
		\includegraphics[width=\textwidth]{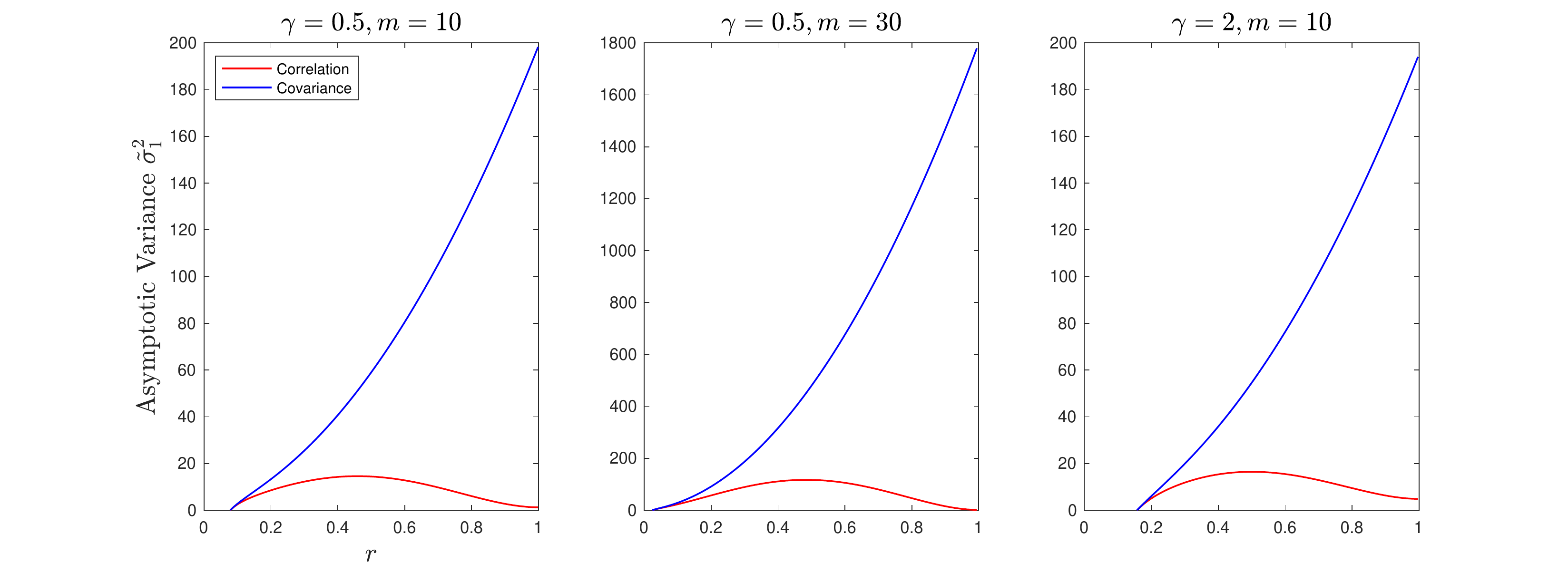}
		\label{fig_single_sector_a}
	\end{subfigure}
	
	\begin{subfigure}[b]{0.9\textwidth}
		\centering
		\caption{Sample-to-population eigenvector projection $p_2^T a_1$}
		\includegraphics[width=\textwidth]{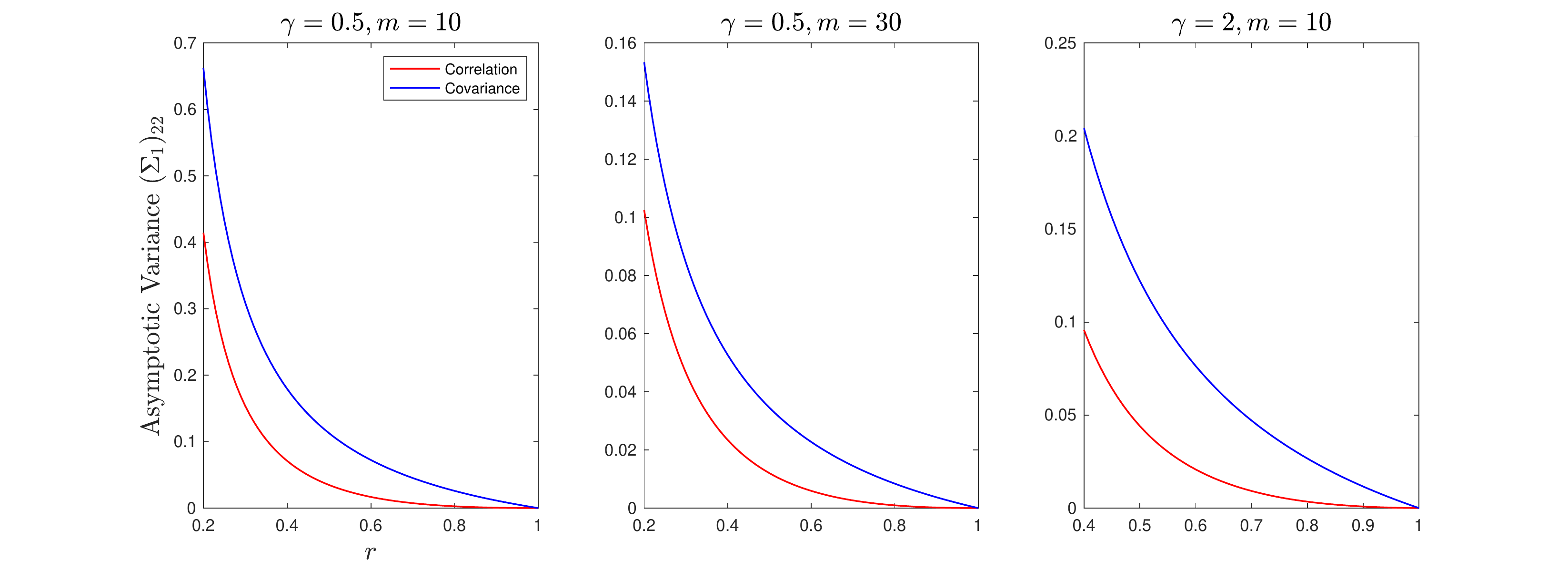}
		\label{fig_single_sector_b}
	\end{subfigure}
	\caption{Differences in the fluctuations of sample eigenvalues and eigenvectors for an example Gaussian model with $\Gamma = (1-r) I_m + r 1_m 1_m^T$. Asympotic variances are shown for (a) the largest sample eigenvalue $\hat \ell_1$ and (b) the normalized sample-to-population eigenvector projection $p_2^T a_1$.}
	\label{fig_single_sector}
\end{figure}


Turn now to the fluctuations of the leading sample eigenvector,
in the same setting as above.
Note that, in Corollary \ref{cor_eigenvec_Gaussian}, for this particular case, one can deduce from $P^T \Gamma P = L$ that
\begin{equation*}
{\cal Z} = m^{-1}  (1-r^2) I_m + r^2 e_1 e_1^T , \qquad
{\cal Y} = m^{-1} I_m.
\end{equation*}
Also from Corollary \ref{cor_eigenvec_Gaussian}, the asymptotic
variances for the normalized sample-to-population eigenvector
projection $p_2^T a_1$, in sample covariance and sample
correlation cases, are respectively computed as 
\begin{equation*}
  \Sigma^{\rm cov}_{1,22} =  \frac{\ell_1 \ell_2}{(r m)^{2} \dot \rho_1}
      , \qquad
      \Sigma_{1,22} = \Sigma^{\rm cov}_{1,22} -  \frac{\zeta}{(r m)^{2}}
                       \frac{\ell_1 \ell_2 (\ell_1+\ell_2)}{m},
\end{equation*}
where $\zeta = 1 - r + \frac12 (1+r) (1+\frac{1-r}{r m})^{-1}$, 
 and we
recall that $\ell_1 = 1 -r + rm$ and $\ell_2 = 1-r$.
These variances are numerically evaluated in Figure
\ref{fig_single_sector_b} for the same parameter choices as before
and, again, as functions of $r$; note however that, for a better
visual appreciation, the range of $r$ has been restricted to
supercritical values sufficiently above the critical point $\sqrt
\gamma / (m-1)$, since the variance explodes at that point. The
comparative evaluation shows once again smaller variances for the
sample correlation.
The variance reduction here is less
visible in the graphs since both $\Sigma_{1, 22}$ and $\Sigma_{1, 22}^{\rm cov}$
vanish as $r \to 1$. 
The ratio, however, behaves quite similarly to the variance ratio
$\tilde{\sigma}_1^2/\sigma_1^2$:
\begin{equation*}
  \frac{\Sigma_{1,22} }{\Sigma^{\rm cov}_{1,22} }
  = 1- \zeta \dot \rho_1 \frac{(\ell_1 + \ell_2)}{m}
  \to
\begin{cases}
\dfrac{\gamma}{(m-1)^2} &   \quad \text{as} \quad r \to 1, \ m
\text{ fixed} \\
(1-r)(1-r/2) &   \quad \text{as} \quad m \to \infty, \ r
\text{ fixed}.
\end{cases}
\end{equation*}


\medskip
\textit{Outline for remainder of paper.} \
First, in Section \ref{sec:preliminaries} we introduce some key quantities and identities that are used in the derivations. Section \ref{sec:normalized} presents necessary asymptotic properties for bilinear forms and matrix quadratic forms with normalized entries, with the corresponding proofs relegated to Section \ref{sec_foundations}. These properties are foundational for describing the asymptotic convergence and distribution of eigenvalues and eigenvectors of sample correlation matrices, derived in Sections \ref{arguments} and \ref{sec_eigenvector} respectively.   

As already noted, a parallel treatment 
for the simpler case of covariance matrices is given in a supplementary
manuscript \citep{Iain}, hereafter abbreviated to [JY].
This aims at a unified exposition of known spectral
properties of spiked covariance matrices as a benchmark for the current
work, along with additional citations to the literature.

\section{Preliminaries}
\label{sec:preliminaries}


We begin with a block representation and some associated reductions for the
sample correlation matrix $R$. These are well known in the covariance
matrix setting. 
As with the
partition of $x$ in Model M, consider 
\begin{equation*}
X = \begin{bmatrix}
	X_1 \\
	X_2
	\end{bmatrix} , \qquad X_1 \in \mathbb{R}^{m \times n}, \quad X_2 \in \mathbb{R}^{p \times n} .
\end{equation*}
Write $S_D = {\rm blkdiag}(S_{D1}, S_{D2})$, with $S_{D1}$
containing the sample variances corresponding to $\xi$ and $S_{D2}$
the ones corresponding to $\eta$. 
Define the `normalized' data matrices $\bar X_1 = S_{D1}^{-1/2} X_1$ and $\bar X_2 = S_{D2}^{-1/2} X_2$, so that
\begin{align*} 
	R =  n^{-1} \begin{bmatrix}
	\bar X_1 \bar X_1^T  &  \bar X_1 \bar X_2^T    \\
	\bar X_2 \bar X_1^T  & \bar X_2 \bar X_2^T 
	\end{bmatrix} 	
	= \begin{bmatrix}
		R_{11} & R_{12}  \\
		R_{21} & R_{22}
	\end{bmatrix}; \qquad
	\hat {\mathfrak p}_\nu = \begin{bmatrix}
		\hat p_\nu \\
		\hat v_\nu
	\end{bmatrix} . 
\end{align*}
This partitioning of the eigenvector equation
$R \, \hat {\mathfrak p}_\nu = \hat \ell_{\nu}  \hat {\mathfrak
  p}_\nu$,
along with $\hat {\mathfrak p}_\nu = [\hat p_\nu^T ,  \hat v_\nu^T ]^T$,
yields
\begin{align*} 
	R_{11} \hat p_\nu + R_{12} \hat v_\nu &= \hat \ell_{\nu} \hat p_\nu \\
	R_{21} \hat p_\nu + R_{22} \hat v_\nu &= \hat \ell_{\nu} \hat v_\nu .
\end{align*}
From the second equation, $\hat v_\nu = (\hat \ell_{\nu} I_p -
R_{22})^{-1} R_{21} \hat p_\nu$.
Putting this into the first equation yields
\begin{align*} 
K(\hat \ell_{\nu}) \hat p_\nu = \hat \ell_{\nu} \hat p_\nu \, , \quad
  {\rm{with}} \quad  K( t ) = R_{11} + R_{12} ( t I_p - R_{22})^{-1}
  R_{21}. 
\end{align*}
Thus $\hat \ell_{\nu}$ is an eigenvalue of $K(\hat \ell_{\nu})$, with
associated eigenvector $\hat p_\nu$; this is central to our
derivations.
Note that $K(\hat \ell_{\nu})$ is well defined if $\hat \ell_{\nu}$ is
well separated from the eigenvalues of $R_{22}$, and 
Section \ref{sec:Prelim} shows that this occurs with probability one
for all large $n$ when $\ell_{\nu}$ is supercritical. Furthermore,
the normalization condition, $\hat p_\nu^T \hat p_\nu + \hat
v_\nu^T \hat v_\nu = 1$ yields 
\begin{equation*} 
	\hat p_\nu^T (I_m + Q_\nu) \hat p_\nu = 1, \qquad Q_\nu = R_{12} (\hat \ell_{\nu} I_p - R_{22})^{-2} R_{21} .
\end{equation*}
Phrased in terms of the signal-space normalized eigenvector $a_\nu = \hat p_\nu / \| \hat p_\nu \|$, we have 
\begin{equation} 
	K(\hat \ell_{\nu}) a_\nu = \hat \ell_{\nu} a_\nu, \qquad \quad 
	a_\nu^T (I_m + Q_\nu) a_\nu =  \| \hat p_\nu \|^{-2}  . \label{av_quad_form}
\end{equation}
Note also that the sample-to-population inner product can be rewritten as
\begin{equation} \label{eig_proj_factored}
	\langle  \hat {\mathfrak p}_\nu,  {\mathfrak p}_\nu  \rangle = \langle  \hat p_\nu,  p_\nu  \rangle =  \| \hat p_\nu \| \langle  a_\nu,  p_\nu  \rangle .
\end{equation}

In the derivation of our CLT results, we use an eigenvector
perturbation formula with quadratic error bound 
given in [JY, Lemma 13],
itself a modification of arguments in
\citet{Paul2007}.
This yields the key expansion    
\begin{equation} \label{eq:anu-decomp}   
a_\nu - p_\nu = - {\cal R}_{\nu n} D_\nu p_\nu + r_\nu,
\end{equation}
where
\begin{equation*}
{\cal R}_{\nu n} = \frac{\ell_{\nu}}{\rho_{\nu n}} \sum_{k \neq \nu}^{m} (\ell_k - \ell_{\nu})^{-1} p_k p_k^T , \qquad
D_\nu = K(\hat \ell_{\nu}) - (\rho_{\nu n}/\ell_{\nu}) \Gamma, \qquad
\| r_\nu \| = O(\| D_\nu \|^2).
\end{equation*}

The derivations of our eigenvalue and eigenvector results,
Sections \ref{arguments} and \ref{arguments_eigenvec} respectively,
take \eqref{av_quad_form}, \eqref{eig_proj_factored} and
\eqref{eq:anu-decomp} as points of departure, and rely on asymptotic
properties of the key objects $K(\hat \ell_{\nu})$ and $Q_\nu$.
In particular, $K( t )$ can be expressed as the random matrix quadratic form
\begin{equation} \label{Kt_def}
K(t) = n^{-1} \bar X_1 B_n(t) \bar X_1^T ,
\end{equation}
where, using the Woodbury identity,
\begin{align*} \label{Bn_def}
B_n(t) &= I_n + n^{-1} \bar X_2^T (t I_p - R_{22})^{-1} \bar X_2 \notag \\
&= t (t I_n - n^{-1} \bar X_2^T \bar X_2 )^{-1}.
\end{align*}
Thus, our key objects are random quadratic forms involving normalized data matrices $\bar X_1$ and $\bar X_2$. The asymptotic properties of these forms are foundational to our results and are presented next.





\section{Results on quadratic forms with normalized entries}
\label{sec:normalized}

In this section, we establish first-order (deterministic) convergence and a CLT for matrix quadratic forms of the type
$n^{-1} \bar X_1 B_n \bar X_1^T$, where $B_n$ is a matrix with bounded spectral norm. While being essential to our purposes, some of the technical developments may be of independent interest; thus, we first present the results in full generality, and then particularize to the context of Model M.

\subsection{First-order convergence} \label{ext_lemma5}



To establish the first-order convergence, we first require some results on bilinear forms involving correlated random vectors of unit length.  A main technical result (see Section \ref{sec_foundations_bound}) is the following:


{\lemma \label{lemma_BS_bound_bilinear}
	Let $B$ be an $n \times n$ non-random symmetric matrix, $x, y \in {\mathbb R}^n$ random vectors of i.i.d. entries with mean zero, variance one, $\E | x_i |^l, \E | y_i |^l  \leq \nu_l$ and $\E [ x_i y_i ] = \rho$. Let $\bar x = \sqrt{n}   x / \| x \|$ and $\bar y = \sqrt{n}  y / \| y \|$. Then, for any $s \geq 1$,
	\begin{equation*}
	\E \left| n^{-1} \bar x^T B \bar y - \rho n^{-1} \tr B \right|^s \leq  {\cal{C}}_s \left[ n^{-s} \left( \nu_{2s} \tr B^s + \left( \nu_4 \tr B^2 \right)^{s/2}  \right) + \| B \|^s \left( n^{-s/2} \nu_4^{s/2} + n^{-s+1} \nu_{2s}  \right)  \right] ,
	\end{equation*} 
	where $ {\cal{C}}_s$ is a constant depending only on $s$.}


This is a generalization of \citet[Lemma~5]{Gao14}, which established a corresponding bound for normalized quadratic forms.  
Lemma \ref{lemma_BS_bound_bilinear} leads to the following first-order convergence result:

%


{\corollary \label{cor_quadForm_as}
	Let $x, y$ be random vectors of i.i.d. entries with mean zero, variance one, $\E | x_i |^{4+\delta}, \E | y_i |^{4+\delta} < \infty$ for some $\delta > 0$, and $\E \left[ x_i y_i \right] = \rho$. Define $\bar x = \sqrt{n}   x / \| x \|$ and $\bar y = \sqrt{n}   y / \| y \|$, and let $B_n$ be a sequence of $n \times n$ symmetric matrices with $\| B_n \|$ bounded. Then,
	\begin{equation*}
     n^{-1} \bar x^T B_n \bar y - n^{-1} \rho \tr B_n \toas 0 .
	\end{equation*} 
}
{\proof Since the $(4+\delta)$th moment and $\|B_n\|$ are bounded, from Lemma~\ref{lemma_BS_bound_bilinear}, 
\begin{equation*}
\E \left| n^{-1} \bar x^T B_n \bar y - n^{-1} \rho \tr B_n \right|^{2+\delta/2} \leq  O(n^{-(1+\delta/4)}).
\end{equation*}
The convergence then follows from Markov's inequality and the Borel-Cantelli lemma.
\endproof}

We now apply this to our Model {\rm M} with \textit{random} matrices
$B_n(\bar X_2)$, independent of $\bar X_1$: 

{\lemma \label{lemma_as}
Assume Model {\rm M} and suppose that $B_n = B_n(\bar X_2)$ is a sequence of random symmetric matrices for which $\| B_n \|$ is $O_{\rm a.s.}(1)$. Then,
\begin{equation*}
n^{-1} \bar X_1 B_n(\bar X_2) \bar X_1^T - n^{-1} \tr B_n(\bar X_2) \Gamma \toas 0. 
\end{equation*} }
{\proof This follows from Fubini's theorem. Specifically, one may use
  the arguments in the proof of [JY, Lemma 5],
applying  Corollary~\ref{cor_quadForm_as},
and noting that $\bar X_1$ is independent of $B_n (\bar X_2)$. 
\endproof}



\subsection{Central Limit Theorem} \label{ext_proposition6}


To establish our main matrix quadratic form CLT result, we first derive a CLT for scalar bilinear forms involving normalized random vectors.  To this end, we must introduce some further notation. Consider zero-mean random vectors $(x, y) \in \mathbb{R}^M \times \mathbb{R}^M$ with
\begin{equation*}
{\rm Cov} \begin{pmatrix}
x \\ y
\end{pmatrix}
=
C =\begin{pmatrix}
	C^{xx} & C^{xy}  \\
	C^{yx} & C^{yy}
	\end{pmatrix} ,
\end{equation*}
where $C_{l l^\prime}^{xy} = \E \, [ x_l y_{l^\prime}]$.
Assume $C_{l l}^{xx} = C_{l l}^{yy} = 1$, i.e., all components of the
$x$ and $y$ vectors have unit variance, and $\rho_l = C_{l l}^{xy} =
\E \, [x_l y_l]$.
We introduce notation for quadratic functions of $x_l, y_l$.
Let $z,w \in \mathbb{R}^M$ with
\begin{equation*}
  z_l = x_l y_l, \qquad  w_l = \rho_l(x_l^2 + y_l^2)/2, \qquad
  C^{zz} = {\rm Cov}(z), \qquad C^{wz} = {\rm Cov}(z,w), \quad {\rm etc.}
\end{equation*}



Let $X = \left( x_{l i} \right)_{M \times n}$ and $Y = \left( y_{l i} \right)_{M \times n}$ be data matrices based on $n$ i.i.d. observations of $(x, y)$, and define the ``normalized'' data matrices  $\bar X = \hat \Sigma_x^{-1/2} X$ and $\bar Y = \hat \Sigma_y^{-1/2} Y$ where $\hat \Sigma_x = {\rm diag} (\hat \sigma_{x_1}^2,\ldots, \hat \sigma_{x_M}^2)$, $\hat \Sigma_y = {\rm diag} (\hat \sigma_{y_1}^2,\ldots, \hat \sigma_{y_M}^2)$,  and $\hat \sigma_{x_l}^2 = n^{-1} \sum_{i=1}^n x_{l i}^2$, $\hat \sigma_{y_l}^2 = n^{-1} \sum_{i=1}^n  y_{l i}^2$.  Introduce notation for the rows $\bar x_{l \cdot }^T$ and $\bar y_{l \cdot }^T$ of the normalized data matrices
\begin{equation*}
\bar X = \left( \bar x_{l i} \right)_{M \times n} =  \begin{bmatrix}
	\bar x_{1 \cdot}^T \\
	\vdots \\
	\bar x_{M \cdot}^T
	\end{bmatrix} , \qquad
\bar Y = \left( \bar y_{l i} \right)_{M \times n} =  \begin{bmatrix}
	\bar y_{1 \cdot}^T \\
	\vdots \\
	\bar y_{M \cdot}^T
	\end{bmatrix} .
\end{equation*}

With this setup, we have the following result, proved in Section
\ref{sec:CLTProof}: 

{\proposition \label{thm_clt}
Let $B_n = (b_{n,ij})$ be random symmetric $n \times n$ matrices, independent of $X, Y$, such that for some finite $\beta$, $\| B_n \| \leq \beta$ for all $n$, and  
	\begin{equation*}
	n^{-1} \sum_{i=1}^n b_{n, ii}^2 \toip  \omega \, , \qquad  n^{-1}  \tr B_n^2 \toip \theta  \, , \qquad   ( n^{-1}  \tr B_n )^2 \toip \phi \,  ,
	\end{equation*}
	all finite.  
	Also, define $Z_n \in {\mathbb R}^M$ with components
	\begin{equation*}
	Z_{n, l} = n^{-1/2} \left[ \bar x_{l \cdot}^T B_n \bar y_{l \cdot} - \rho_l  \tr B_n   \right].
	\end{equation*}
	Then $Z_n  \xrightarrow{\cal D} N_M(0,D)$ with
        \begin{equation}
          \label{D_thm}
          D = (\theta - \omega)J + \omega K_1 + \phi K_2
            = \theta J + \omega K + \phi K_2,
        \end{equation}
	where $K = K_1-J$ and $J, K_1, K_2$ are matrices defined by 
	\begin{align} \label{Kdef}
          J & = C^{xy} \circ C^{yx} + C^{xx} \circ C^{yy} \notag \\
          K_1 & = C^{zz} \\
          K_2 & 
                = C^{ww} - C^{wz} -C^{zw} . \notag
	\end{align}
}
The entries of $K$ are fourth order cumulants of $x$ and
$y$:
\begin{equation}  \label{K_llp}
  K_{ll'}  = \E(x_ly_lx_{l'}y_{l'})
  -\E(x_{l}y_{l}) \E(x_{l'}y_{l'})
  -\E(x_{l}y_{l'}) \E(y_{l}x_{l'})
  -\E(x_{l}x_{l'}) \E(y_{l}y_{l'}).    
\end{equation}
Hence $K$ vanishes if $x, y$ are Gaussian.

The corresponding result with unnormalized vectors is established in
the supplementary manuscript, [JY Theorem 10].
The terms $\theta J + \omega K$ appear in that case, while the
additional term $\phi K_2$ reflects the normalization in
$\bar x_{l \cdot}$ and $ \bar y_{l \cdot}$.
As in [JY], the proof is based on the martingale CLT, in place of the
moment method used in \cite{bai2008},
which stated a similar result for quadratic forms involving unnormalized random vectors.

While potentially of independent interest, Proposition \ref{thm_clt} is important for our purposes
through its application to Model M. 

{\proposition \label{prop_clt}
	
	Assume Model {\rm M} and consider $B_n$ as in Proposition \ref{thm_clt}.  Then,   
	\begin{equation*}
	W_n = n^{-1/2} \left[ \bar X_1 B_n \bar X_1^T - (\tr B_n) \Gamma  \right] \xrightarrow{\cal D} W,
	\end{equation*}
	a symmetric $m \times m$ Gaussian matrix having entries $W_{ij}$ with mean $0$ and covariances
	\begin{equation} \label{cov_prop}
          {\rm Cov}[W_{ij}, W_{i^\prime j^\prime}]
          = \theta  (\kappa_{i j^\prime} \kappa_{j i^\prime} +
            \kappa_{i i^\prime} \kappa_{j j^\prime})
            + \omega  \kappa_{i j i^\prime j^\prime}
            + \phi \check{\kappa}_{iji'j'}.
	\end{equation}
	for $i \leq j$ and $i^\prime \leq j^\prime$.
}
{\proof
The result follows from Proposition~\ref{thm_clt} by
turning the matrix quadratic form $\bar X_1 B_n \bar X_1^T$ into a
vector of bilinear forms, as done also, for example, in [JY,
Proposition 6] and 
\citet[Proposition 3.1]{bai2008}.
Specifically, use an index $l$ for the $M=m(m+1)/2$  pairs $(i,j)$ with $1\leq i \leq j \leq m$. Build the random vectors $(x,y)$ for Proposition~\ref{thm_clt} as follows: if $l = (i,j)$ then set $x_l = \xi_i / \sigma_i$ and $y_l = \xi_j / \sigma_j$. In the resulting covariance matrix $C$ for $(x,y)$, if also $l^\prime = (i^\prime, j^\prime)$,
	\begin{equation*} \label{cov_thm_for_prop}
	C_{l l^\prime}^{xy} = \E [\xi_i \xi_{j^\prime}] / (\sigma_i \sigma_{j^\prime}) = \kappa_{i j^\prime}, \qquad  C_{l l^\prime}^{yx} = \kappa_{j i^\prime}, \qquad C_{l l^\prime}^{xx} = \kappa_{i i^\prime}, \qquad C_{l l^\prime}^{yy} = \kappa_{j j^\prime}
	\end{equation*}
	and, in particular, $\rho_{l} = C_{l l}^{xy} = \kappa_{i j}$ and $\rho_{l^\prime} = \kappa_{i^\prime j^\prime}$, whereas $C_{l l}^{xx} = C_{l l}^{yy} = 1$.
	Component $W_{n,ij}$ corresponds to component $Z_l$ in
        Proposition~\ref{thm_clt}, and we conclude that $W_n
        \xrightarrow{\cal D} W$, where $W$ is a Gaussian matrix with
        zero mean and ${\rm Cov}(W_{ij}, W_{i^\prime, j^\prime}) =
        D_{l l^\prime}$ given by Proposition~\ref{thm_clt}.
It remains to interpret the quantities in \eqref{D_thm} in terms of
Model M.
Substituting $x_l = \bar{\xi}_i$ and $y_l = \bar{\xi}_j$ into
    \eqref{K_llp} and chasing definitions, we get
    $ J_{ll'} = \kappa_{i j'} \kappa_{j i'} + \kappa_{i i'} \kappa_{j j'}$
    and $ K_{ll'} = \kappa_{i j i^\prime j^\prime}$. 
    Observing that $z_l = x_ly_l = \chi_{ij}$ and $w_l =
    \rho_l(x_l^2+y_l^2)/2 = \psi_{ij}$, we similarly find that
    $K_{2,ll'} = \check{\kappa}_{iji'j'}$.
	\endproof}

\section{Proofs of the eigenvalue results} \label{arguments}

In this section we derive the main eigenvalue results, presented in Theorem \ref{thm_main} and Theorem~\ref{thm_main_eigenvec_subcritical}-$(i)$.  


\subsection{Preliminaries}
\label{sec:Prelim}
\textbf{Convergence properties of the eigenvalues of $R_{22}$.} \ 
It is well-known that the empirical spectral density (ESD) of $S_{22}$
converges weakly a.s.\ to the Marchenko-Pastur (MP) law $F_\gamma$,
and that the extreme non-trivial eigenvalues converge to the edges of
the support of $F_\gamma$. For the sample correlation case, 
\citet{Jiang04} shows that the same is true for $R_{22}$. That is, the
empirical distribution of the eigenvalues $\mu_1 \geq \ldots \geq
\mu_p$ of the ``noise'' correlation matrix $R_{22} = n^{-1} \bar X_2
\bar X_2^T$ converges weakly a.s. to the MP law $F_\gamma$, supported
on $[a_\gamma, b_\gamma] = [(1-\sqrt{\gamma})^2, (1+\sqrt{\gamma})^2]$
if $\gamma \leq 1$, and on $\{0\} \cup [a_\gamma, b_\gamma]$
otherwise. Also, the ESD of the $n \times n$ companion matrix $C_n =
n^{-1} \bar X_2^T \bar X_2$, denoted by ${\sf F}_n$, converges weakly
a.s. to the ``companion MP law'' ${\sf F}_\gamma = (1-\gamma) {\bb
  1}_{[0,\infty)} + \gamma F_\gamma$, where ${\bb 1}_{\rm A}$ denotes
the indicator function on set ${\rm A}$. 

It was also shown in \cite{Jiang04} that
\begin{equation} 
\label{Cn_extremes_conv}
\mu_1 \toas b_\gamma \qquad {\rm and} \qquad \mu_{p\wedge n} \toas a_\gamma .
\end{equation}
With these results, if $f_n \to f$ uniformly as continuous functions on the closure ${\mathcal I}$ of a bounded neighborhood of the support of ${\sf F}_\gamma$, then:
\begin{equation} \label{int_law_conv}
\int f_n(x) {\sf F}_n(dx)  \toas  \int f(x) {\sf F}_\gamma(dx) .
\end{equation}
If ${\rm supp}({\sf F}_n)$ is not contained in ${\mathcal I}$, then the left side integral may not be defined. However, 
such event occurs for at most finitely many $n$ with probability one. 


\medskip
\textbf{Almost sure limit of $\hat \ell_{\nu}$.} \ 
The statements in Theorem \ref{thm_main}-$(i)$ and Theorem~\ref{thm_main_eigenvec_subcritical}-$(i)$ follow easily from known results.  Specifically, denote the $\nu$th eigenvalue of the sample covariance $S$ by $\hat \lambda_\nu$. The almost sure limits
\begin{equation}  \label{eq:CovLimits}
\hat \lambda_\nu \toas
\left\{
\begin{array}{c l}	
     \rho_{\nu} , & \ell_\nu > 1 + \sqrt{\gamma} \\
     (1 + \sqrt{\gamma})^2 , &  \ell_\nu \leq 1 + \sqrt{\gamma}
\end{array}\right.
\end{equation}
were established in \cite{baik-silver2006}. From the proof of
\citet[Lemma 1]{ElKaroui2009},  
\begin{equation*}
\max_{i=1,\ldots,m} | \hat \lambda_i - \hat \ell_i | \toas 0 .
\end{equation*}
Therefore the same almost sure limits as \eqref{eq:CovLimits} hold for $\hat{\ell}_{\nu}$.



\medskip
\textbf{High probability events $J_{n \epsilon},  J_{n \epsilon 1}$.} \ 
When necessary, we may confine attention to the event $J_{n \epsilon} = \{ \hat \ell_{\nu} > \min(\rho_\nu, \rho_{\nu n}) -\epsilon, \mu_1 \leq b_\gamma+\epsilon  \}$ or $J_{n \epsilon 1} = \{ \mu_1 \leq b_\gamma+\epsilon  \}$, with $\epsilon>0$ chosen such that $\rho_\nu - b_\gamma \geq 3\epsilon$,  
since from \eqref{eq:SuperSpkEVal} (proven above) and \eqref{Cn_extremes_conv}, these events  occur with probability one for all large $n$.




\medskip
\textbf{Asymptotic expansion of $K(\hat \ell_\nu)$.} \ 
We will establish an asymptotic stochastic expansion for the quadratic form $K(\hat \ell_\nu)$. 
 Specifically, using the decomposition
\begin{equation} \label{eq:K_decomp}
K(\hat \ell_\nu) = K(\rho_{\nu n}) - \left[ K(\hat \ell_\nu) - K(\rho_{\nu n})  \right] \; ,
\end{equation}
we will show that
\begin{equation} \label{Kconv1}
K(\rho_{\nu n})  \toas   - \rho_{\nu} \, {\sf m}( \rho_{\nu}; \gamma) \, \Gamma \, = \, ({\rho_\nu}/{\ell_\nu}) \Gamma 
\end{equation}
and
\begin{equation} \label{eq:delK-firstorder}
K(\hat \ell_\nu) - K(\rho_{\nu n}) = -(\hat \ell_\nu - \rho_{\nu n}) \, [ c(\rho_\nu) \Gamma + o_{\rm a.s.}(1) ] 
\end{equation}
where, for $t \notin {\rm supp}( {\sf F}_\gamma )$,
\begin{equation*} 
{\sf m}(t; \gamma) = \int (x- t)^{-1} {\sf F}_\gamma(dx) , \quad \quad  c( t ) = \int x ( t -x)^{-2} {\sf F}_\gamma (dx).
\end{equation*}
Here ${\sf m}$  is the Stieltjes transform of the companion distribution ${\sf F}_\gamma$.

In establishing \eqref{Kconv1}, start by taking $n$ large enough that $| \rho_{\nu n} - \rho_\nu | \leq \epsilon$, with $\epsilon$ defined as above.  For such $n$, on $J_{n \epsilon 1}$,  we have
\begin{equation*}
\|  B_n (\rho_{\nu n} ) \|  \leq \frac{\rho_\nu + \epsilon }{ \epsilon } \; .
\end{equation*}
Since $J_{n \epsilon 1}$ holds  with probability one for all large $n$, $\|  B_n (\rho_{\nu n} ) \| = O_{\rm a.s.}(1)$, and therefore it follows from  Lemma \ref{lemma_as} that 
\begin{equation*}
K(\rho_{\nu n}) - n^{-1}  \tr B_n(\rho_{\nu n}) \, \Gamma  \toas 0 \; .
\end{equation*}
In addition, \eqref{int_law_conv} yields 
\begin{equation*} \label{trB_limit}
n^{-1} \tr B_n(\rho_{\nu n}) = \int \rho_{\nu n} (\rho_{\nu n}-x)^{-1} {\sf F}_n (dx)  \toas \int \rho_{\nu} (\rho_{\nu}-x)^{-1} {\sf F}_\gamma (dx) =  - \rho_{\nu} \, {\sf m}( \rho_{\nu}; \gamma) \,  .
\end{equation*}
Explicit evaluation gives 
${\sf m}(\rho_{\nu};\gamma) =   -1/\ell_\nu$, [JY, Appendix A],
and \eqref{Kconv1} follows.



To establish \eqref{eq:delK-firstorder}, 
start by recalling that $C_n = n^{-1} \bar{X}_2^T \bar{X}_2$,  and introduce the resolvent notation $Z(t) = (tI_n - C_n)^{-1}$, so that $B_n(t) = t Z(t)$ and $K(t) = n^{-1} \bar X_1 tZ(t) \bar X_1^T$. From the resolvent identity, i.e., $A^{-1} - B^{-1} = A^{-1} (B - A) B^{-1}$ for square invertible $A$ and $B$, and noting that $tZ(t) = C_n Z(t) + I$ from the Woodbury identity, we have, for $t_1, t_2 > b_\gamma$,
\begin{equation*} 
t_1 Z(t_1) - t_2 Z(t_2) = -(t_1-t_2) C_n Z(t_1) Z(t_2)
\end{equation*}
so that
\begin{equation*} \label{diffK_lhat_rho}
K(\hat \ell_\nu) - K(\rho_{\nu n}) = -(\hat \ell_\nu - \rho_{\nu n}) n^{-1} \bar X_1 C_n Z(\hat \ell_\nu) Z(\rho_{\nu n}) \bar X_1^T .
\end{equation*}
Moreover, again by the resolvent identity, $Z(\hat \ell_{\nu}) = Z(\rho_{\nu n}) - (\hat \ell_{\nu} - \rho_{\nu n}) Z(\hat \ell_{\nu}) Z(\rho_{\nu n})$ and we arrive at
\begin{equation} \label{eq:Kdiff2}  
K(\hat \ell_\nu) - K(\rho_{\nu n}) = -(\hat \ell_{\nu} - \rho_{\nu n}) n^{-1} \bar X_1 B_{n1}(\rho_{\nu n},\rho_{\nu n}) \bar X_1^T + (\hat \ell_{\nu} - \rho_{\nu n})^2  n^{-1} \bar X_1  B_{n2}(\hat \ell_{\nu} , \rho_{\nu n}) \bar X_1^T,
\end{equation}
with $B_{nr}(t_1,t_2)$ defined as
\begin{equation} \label{Bnr_def}
B_{nr}(t_1, t_2) = C_n Z(t_1) Z^r(t_2) \; .
\end{equation}
We now characterize the first-order behavior of the two matrix quadratic forms in \eqref{eq:Kdiff2}.  For the first one, we simply mirror the arguments of the proof of \eqref{Kconv1} to obtain  
\begin{equation*} \label{eq:B_n1_as}  
n^{-1} \bar X_1 B_{n1}(\rho_{\nu n},\rho_{\nu n}) \bar X_1^T \toas c(\rho_{\nu}) \Gamma  \; .
\end{equation*}
   For the second one, we again apply similar reasoning, operating on the event $J_{n \epsilon}$. Specifically, it is easy to establish that on $J_{n \epsilon}$, and for $n$ sufficiently large that $| \rho_{\nu n} - \rho_\nu | \leq \epsilon$, $\|  B_{n2}(\hat \ell_{\nu} , \rho_{\nu n}) \|$ is bounded.  Hence, $\|   B_{n2}(\hat \ell_{\nu} , \rho_{\nu n}) \| = O_{\rm a.s.}(1)$, and it follows from  Lemma~\ref{lemma_as} along with \eqref{int_law_conv}  that  
\begin{equation*}
n^{-1} \bar X_1  B_{n2}(\hat \ell_{\nu} , \rho_{\nu n}) \bar X_1^T  = O_{\rm a.s.}(1) .
\end{equation*}
The expansion in \eqref{eq:delK-firstorder} is obtained upon combining the last two displays with \eqref{eq:Kdiff2}.




\medskip
\textbf{CLT of $K(\rho_{\nu n})$.}\
We now specialize Proposition~\ref{prop_clt} to the matrix quadratic
form $K(\rho_{\nu n})$. 


{\proposition \label{prop:CLT_W_n}
	Assume Model {\rm M} and define $\rho_{\nu n}$ by \eqref{rho_nu_def} and  $K(\rho_{\nu n})$ by \eqref{Kt_def}. Then,
	\begin{align*} 
	W_n(\rho_{\nu n}) = \sqrt{n} \left[ K(\rho_{\nu n}) - n^{-1} \tr B_n(\rho_{\nu n}) \Gamma  \right]  \xrightarrow{\cal D} W^{\nu} ,
	\end{align*}
	a symmetric Gaussian random matrix having entries $W_{ij}^{\nu}$ with mean zero and covariance given by 
	\begin{align} \label{cov_prop:CLT_W_n}
          {\rm Cov}[W_{ij}^\nu, W_{i^\prime j^\prime}^\nu]
          &= \frac{\rho_{\nu}^2}{\ell_{\nu}^2 \dot \rho_\nu}
            \left( \kappa_{i j'} \kappa_{j i'} + \kappa_{ii'}
            \kappa_{j j'} \right)
           + \frac{\rho_{\nu}^2}{\ell_{\nu}^2} (\kappa_{iji'j'} +
            \check{\kappa}_{iji'j'}),  
	\end{align}
where $\rho_\nu, \dot \rho_\nu$ are defined in \eqref{rho_nu_def}, and the
terms in parentheses through \eqref{kappacum} and \eqref{kcheckdef}.

}
{\proof   
 Recall $J_{n \epsilon 1} = \{ \mu_1 \leq b_\gamma+\epsilon  \}$ and
 consider $n$ large enough that $\rho_{\nu n} > \rho_\nu -
 \epsilon$. Then, we may apply Proposition~\ref{prop_clt} with $B_n =
 B_n (\rho_{\nu n}) {\bb 1}_{J_{n \epsilon 1}}$, which is independent
 of $\bar{X}_1$, and for which $\| B_n \|$ is bounded. Specifically,
 the result follows by applying Proposition~\ref{prop_clt} to $W_n
 (\rho_{\nu n} ) {\bb 1}_{J_{n \epsilon 1}}$, along with the fact that
 ${\bb 1}_{J_{n \epsilon 1}}  \toas 1$, and particularizing $\omega$,
 $\theta$, and $\phi$ in \eqref{cov_prop}. These quantities, denoted
 respectively by $\omega_\nu$, $\theta_\nu$, and $\phi_\nu$ can be
 computed as in [JY, Appendix A], yielding
 
	\begin{equation*} 
	\omega_\nu = \phi_\nu = \frac{(\ell_{\nu}-1+\gamma)^2}{(\ell_{\nu}-1)^2}  = \frac{\rho_{\nu}^2}{\ell_{\nu}^2} \,\, , \qquad
	\theta_\nu =
          \frac{(\ell_{\nu}-1+\gamma)^2}{(\ell_{\nu}-1)^2-\gamma} =
          \frac{\omega_\nu}{\dot \rho_{\nu}}  .  \qquad \qedhere
	\end{equation*}
}


\textbf{Tightness properties.} \ 
Last but not least, we will establish some tightness properties which will be essential in the derivation of our second-order results.

We first establish a refinement of \eqref{Kconv1}.
Define $K_0(\rho; \gamma) := - \rho \ssm(\rho ; \gamma) \Gamma$, so that \eqref{Kconv1} is rewritten as 
$K(\rho_{\nu n}) \toas K_0(\rho_\nu ; \gamma)$.
Set $g_\rho(x) = \rho (\rho - x)^{-1}$ and write
\begin{equation*}
\tr B_n(\rho) = \sum_{i=1}^n \rho (\rho-\mu_i)^{-1}
= \sum_{i=1}^n g_\rho(\mu_i).
\end{equation*}
Also, introducing
\begin{equation*}
G_n(g) := \sum_{i=1}^n g(\mu_i) - n\int g(x) {\sf{F}}_{\gamma_n}(dx) 
\end{equation*}
we may write
\begin{align}
K(\rho)- K_0(\rho;\gamma_n)
& = n^{-1}[K(\rho) - \tr B_n(\rho) \Gamma] + 
\rho n^{-1} \bigg[\sum_{i=1}^n (\rho-\mu_i)^{-1} -n \int (\rho-x)^{-1} \ssf_{\gamma_n}(dx) \bigg]
\Gamma \notag \\ 
& = n^{-1/2} W_n(\rho) +  n^{-1} G_n(g_\rho) \Gamma. \label{eq:Wdecomp}
\end{align}

\begin{lemma}
	\label{lem:tightness}
	Assume that Model M holds and that $\ell_\nu > 1 + \sqrt
	\gamma$ is simple. For some $b > \rho_1$, let $I$ denote the interval
	$[b_\gamma+3 \epsilon, b]$. Then
	\begin{gather}
	\{ G_n(g_\rho), \rho \in I\} \ \text{is uniformly
		tight}, \label{eq:Gtight} \\
	\{ n^{1/2}[K(\rho)-K_0(\rho ;\gamma_n)], \rho \in I\} \ \text{is uniformly
		tight}, \label{eq:Ktight} \\
	\lhat_\nu-\rhonn = O_p(n^{-1/2}),      \label{eq:evaltight} \\
	a_\nu - p_{\nu} = O_p(n^{-1/2}).   \label{eq:evectight} 
	\end{gather}
\end{lemma}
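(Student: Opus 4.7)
My plan is to establish the four claims sequentially: (1) and (2) are tightness statements about random fields indexed by $\rho \in I$, while (3) and (4) are rates that will follow from them together with the eigenvector equation and the perturbation expansion \eqref{eq:anu-decomp}. For (1), the map $\rho \mapsto g_\rho(x) = \rho/(\rho-x)$ is smooth on a compact neighborhood of $\mathrm{supp}(\ssf_\gamma)$, uniformly in $\rho \in I$ since $I$ is bounded away from the support. The CLT for linear spectral statistics of sample correlation matrices \citep{Gao14} yields pointwise convergence $G_n(g_\rho) \toodist N(\mu(\rho),\sigma^2(\rho))$ with $\mu,\sigma^2$ continuous in $\rho$. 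To promote this to tightness of $\{G_n(g_\rho):\rho\in I\}$ in $C(I)$, I would apply the same LSS variance bound to the increment $g_\rho - g_{\rho'}$, using that its $C^k$-norm is controlled by $C|\rho-\rho'|$ uniformly in $\rho,\rho'\in I$, to obtain $\E |G_n(g_\rho)-G_n(g_{\rho'})|^2 \leq C|\rho-\rho'|^2$; combined with the pointwise tightness, this gives (1). For (2), use the decomposition \eqref{eq:Wdecomp}: since $n^{-1/2} G_n(g_\rho)\Gamma$ is uniformly $o_p(1)$ by (1), it suffices to prove tightness of $W_n(\rho)$ in $C(I)$. Proposition~\ref{prop:CLT_W_n} gives pointwise tightness. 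For equicontinuity, the resolvent identity gives $B_n(\rho_1)-B_n(\rho_2) = (\rho_2-\rho_1)C_n(\rho_1 I - C_n)^{-1}(\rho_2 I-C_n)^{-1}$, whose operator norm is $\leq C|\rho_1-\rho_2|$ on the event $J_{n\epsilon 1}$ uniformly in $n$. Applying Lemma~\ref{lemma_BS_bound_bilinear} to this Lipschitz increment yields $\E\|W_n(\rho_1)-W_n(\rho_2)\|^2 \leq C|\rho_1-\rho_2|^2$, which with pointwise tightness gives tightness in $C(I)$ by a Kolmogorov-type criterion.

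For (3), project the eigenvector equation $K(\hat\ell_\nu) a_\nu = \hat\ell_\nu a_\nu$ onto $p_\nu$; using $p_\nu^T \Gamma = \ell_\nu p_\nu^T$ yields $(\hat\ell_\nu-\rho_{\nu n}) p_\nu^T a_\nu = p_\nu^T D_\nu a_\nu$, where $D_\nu = K(\hat\ell_\nu) - (\rho_{\nu n}/\ell_\nu)\Gamma$. Decompose $D_\nu = [K(\rho_{\nu n}) - K_0(\rho_{\nu n};\gamma_n)] + [K(\hat\ell_\nu)-K(\rho_{\nu n})]$; by (2) the first bracket has operator norm $O_p(n^{-1/2})$, while \eqref{eq:delK-firstorder} identifies the second as $-(\hat\ell_\nu-\rho_{\nu n})[c(\rho_\nu)\Gamma + o_{\mathrm{a.s.}}(1)]$. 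Substitution and rearrangement give
\[
(\hat\ell_\nu-\rho_{\nu n})(p_\nu^T a_\nu)\bigl[1 + c(\rho_\nu)\ell_\nu + o_{\mathrm{a.s.}}(1)\bigr] = O_p(n^{-1/2}).
\]
Using $\ssm(\rho_\nu) = -1/\ell_\nu$ and $c(\rho) = (\ssm(\rho)+\rho\ssm'(\rho))/\gamma$, the coefficient evaluates to $(\ell_\nu^2-\ell_\nu+1-\gamma)/[(\ell_\nu-1)^2\dot\rho_\nu]$, which is strictly positive in the supercritical regime $\ell_\nu > 1+\sqrt\gamma$. First-order consistency $a_\nu \toas p_\nu$---which I would derive here from the already-established $\hat\ell_\nu \toas \rho_\nu$ together with $K(\hat\ell_\nu) \toas (\rho_\nu/\ell_\nu)\Gamma$ (a consequence of \eqref{Kconv1}--\eqref{eq:delK-firstorder}), via eigenvector continuity for the simple eigenvalue $\rho_\nu$---ensures $p_\nu^T a_\nu$ is bounded away from zero, yielding (3). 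Finally, for (4), the decomposition of $D_\nu$ with (3) gives $\|D_\nu\| = O_p(n^{-1/2})$; plugging this into \eqref{eq:anu-decomp}, the bounded norm of $\mathcal{R}_{\nu n}$ gives $\|\mathcal{R}_{\nu n} D_\nu p_\nu\| = O_p(n^{-1/2})$ and $\|r_\nu\| = O(\|D_\nu\|^2) = O_p(n^{-1})$, so $\|a_\nu - p_\nu\| = O_p(n^{-1/2})$.

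The main technical obstacle is (2), specifically upgrading the pointwise CLT of Proposition~\ref{prop:CLT_W_n} to uniform-in-$\rho$ tightness of $W_n(\cdot)$ over $I$. This demands the stochastic-equicontinuity bound above, which in turn requires a careful application of Lemma~\ref{lemma_BS_bound_bilinear} to the Lipschitz matrix increments $B_n(\rho_1)-B_n(\rho_2)$, tracking how resolvent norms behave uniformly in $n$ on the high-probability event $J_{n\epsilon 1}$. A parallel (but simpler, being scalar-valued) issue arises in (1). Once these uniform moment bounds are secured, the passage from (1)--(2) to the rates (3)--(4) is a fairly direct projection-and-expansion argument driven by \eqref{eq:delK-firstorder} and \eqref{eq:anu-decomp}.
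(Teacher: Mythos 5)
Your proposal is correct, and for claims \eqref{eq:Gtight}, \eqref{eq:Ktight} and \eqref{eq:evectight} it matches the paper's proof essentially step for step: tightness of $G_n(g_\rho)$ via the process-level results behind the LSS CLT of \citet{Gao14}, tightness of $W_n(\cdot)$ via the resolvent identity for $B_n(\rho)-B_n(\rho')$ and Lemma~\ref{lemma_BS_bound_bilinear} applied to the Lipschitz increment (the paper invokes the moment criterion of \citet[eq.~(12.51)]{Bill68}, which is exactly your Kolmogorov-type criterion), and then \eqref{eq:evectight} from \eqref{eq:anu-decomp} and $\|D_\nu\|=O_p(n^{-1/2})$. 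Where you genuinely diverge is \eqref{eq:evaltight}. The paper converts the uniform tightness of $n^{1/2}[K(\rho)-K_0(\rho;\gamma_n)]$ over $\rho\in I$ into a deterministic eigenvalue-perturbation bound, uses the strict negativity of $\partial_\rho\lambda_{0\nu}(\rho)$ to show that $\lambda_\nu(\rho)$ must change sign on $(\rho_{\nu n}-Mn^{-1/2},\rho_{\nu n}+Mn^{-1/2})$, and identifies the resulting root with $\hat\ell_\nu$ via the Schur-complement factorization of $\det(R-\rho I)$ and the known a.s.\ limits. You instead project the eigenvector equation onto $p_\nu$, obtaining $(\hat\ell_\nu-\rho_{\nu n})\,p_\nu^Ta_\nu=p_\nu^TD_\nu a_\nu$, and solve for $\hat\ell_\nu-\rho_{\nu n}$ using \eqref{eq:delK-firstorder}, the pointwise $O_p(n^{-1/2})$ bound on $K(\rho_{\nu n})-K_0(\rho_{\nu n};\gamma_n)$, and the already-available consistency $a_\nu\toas p_\nu$ (which indeed does not depend on this lemma, so there is no circularity). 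Your route is shorter and only needs tightness at the single point $\rho_{\nu n}$ rather than uniformly over $I$; the paper's route is self-contained in $\rho$-space, needs no information about $a_\nu$, and in addition certifies that the relevant root of $\det(K(\rho)-\rho I_m)$ is genuinely the sample eigenvalue $\hat\ell_\nu$, a fact reused elsewhere. One small slip: your explicit evaluation of $1+c(\rho_\nu)\ell_\nu$ is not right; the correct value is $\rho_\nu/(\ell_\nu\dot\rho_\nu)$ (see Section~\ref{sec:EvecInconsistency}), though it is strictly positive in the supercritical regime either way, so your conclusion stands.
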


\begin{proof}
The proofs of \eqref{eq:Gtight}--\eqref{eq:evaltight} appear in
Appendix \ref{sec:proof-lemma-ref}. 	
We show \eqref{eq:evectight} using the expansion $a_\nu - p_\nu = - {\cal R}_{\nu n} D_\nu p_\nu + r_\nu$ given in \eqref{eq:anu-decomp}, from where we recall $\| r_\nu \| = O(\| D_\nu \|^2)$ and note that $\| \mathcal{R}_{\nu n} \| \leq C$ and $D_\nu = K(\hat \ell_{\nu}) - K_0(\rhonn;\gamma_n)$. We then have $a_\nu - p_{\nu}  = O_p( \|D_\nu\| + \|D_\nu\|^2)$.
	Also, from
	\begin{equation*}
		\| D_\nu \|
		\leq \| K(\lhat_\nu) - K(\rhonn)\| + \|K(\rhonn)-K_0(\rhonn;\gamma_n)\|,
	\end{equation*}
	the first term is $O_p(n^{-1/2})$ by \eqref{eq:delK-firstorder} and \eqref{eq:evaltight}, 
	and so is the second term by \eqref{eq:Ktight}. Hence,
	\begin{equation} \label{eq:D_nu_bound}
		\| D_\nu \| = O_p(n^{-1/2})
	\end{equation}
and the proof is completed.
\end{proof}

\subsection{Eigenvalue fluctuations (Theorem~\ref{thm_main}-$(ii)$)}

The proof of Theorem~\ref{thm_main}-$(ii)$ relies on the key expansion
\begin{equation} \label{eq:clt-ready1}
\sqrt n (\lhat_\nu-\rhonn)[1+c(\rho_\nu)\ell_\nu + o_p(1)]
= p_{\nu}^T W_n(\rho_{\nu n})p_{\nu} + o_p(1) ,
\end{equation}
which is obtained by combining the vector equations
$K(\lhat_\nu) a_\nu = \lhat_\nu a_\nu$ and
$K_0(\rhonn; \gamma_n) p_\nu = \rhonn p_\nu$ with
expansions \eqref{eq:Kdiff2} for $K(\lhat_\nu) - K(\rhonn)$ 
and \eqref{eq:Wdecomp} for $K(\rhonn) - K_0(\rhonn;\gamma_n)$. Specifically, we first use $[K(\lhat_\nu) - \lhat_\nu I_m] a_\nu=0$ to write
\begin{equation}   \label{eq:eigenvalue_fluc_proof1}
p_{\nu}^T [K(\lhat_\nu) - \lhat_\nu I_m] p_{\nu}
= (a_\nu-p_{\nu})^T [K(\lhat_\nu) - \lhat_\nu I_m] (a_\nu-p_{\nu})
= O_p(n^{-1}),
\end{equation}
since $\| K(\lhat_\nu) - \lhat_\nu I_m \| = O_p(1)$ by \eqref{eq:K_decomp}-\eqref{eq:delK-firstorder} and \eqref{eq:SuperSpkEVal}, and
$a_\nu-p_{\nu} = O_p(n^{-1/2})$ by Lemma \ref{lem:tightness}.
Also, since $[ K_0(\rhonn; \gamma_n) - \rhonn I_m ] p_{\nu} = 0$, it follows that
\begin{align} \label{eq:eigenvalue_fluc_proof2}
p_{\nu}^T  [K(\lhat_\nu) - \lhat_\nu I_m] p_{\nu}
& = p_{\nu}^T [ K(\lhat_\nu) -  K_0(\rhonn ;\gamma_n) - (\lhat_\nu - \rhonn)I_m] p_{\nu} \notag \\
& = p_{\nu}^T [ K(\lhat_\nu) -  K(\rhonn) - (\lhat_\nu - \rhonn)I_m] p_{\nu}
+ p_{\nu}^T [ K(\rhonn) -  K_0(\rhonn;\gamma_n) ] p_{\nu} \notag \\
& = -(\lhat_\nu-\rhonn)[1 + c(\rho_\nu)\ell_\nu + o_p(1)] + n^{-1/2} p_{\nu}^T W_n(\rho_{\nu n}) p_{\nu} + o_p(n^{-1/2}),
\end{align}
where the last equality follows from \eqref{eq:delK-firstorder}, \eqref{eq:Wdecomp} and \eqref{eq:Gtight}.
Combining \eqref{eq:eigenvalue_fluc_proof1} and \eqref{eq:eigenvalue_fluc_proof2}
gives \eqref{eq:clt-ready1}. 

Asymptotic normality of $\sqrt n (\lhat_\nu-\rhonn)$ now follows from
Proposition \ref{prop:CLT_W_n}, with asymptotic variance
\begin{align*}
  \tilde \sigma_\nu^2
  &= [ 1 + c(\rho_\nu) \ell_\nu ]^{-2} \,\,  {\rm
                      Var} \left[ p_\nu^T W^\nu p_\nu \right]
 =   (\dot \rho_\nu \ell_\nu/ \rho_\nu)^2 \,\, \sum_{i, j, i', j'} \mathcal{P}^\nu_{iji'j'} {\rm Cov} [W_{ij}^\nu , W_{i^\prime j^\prime}^\nu],
\end{align*}
where $W^\nu$ is the $m \times m$ symmetric Gaussian random matrix
defined in Proposition~\ref{prop:CLT_W_n}, with
covariance ${\rm Cov} [W_{ij}^\nu , W_{i^\prime j^\prime}^\nu]$ given
by \eqref{cov_prop:CLT_W_n}. Using this in the developed expression for
the variance above leads to
\begin{equation}
  \label{three}
  \tilde \sigma_\nu^2
  = \dot \rho_\nu \sum_{i,j,i',j'} \mathcal{P}^\nu_{iji'j'}
  (\kappa_{ij'}\kappa_{ji'} + \kappa_{ii'} \kappa_{jj'})
  + \dot \rho_\nu^2 [\mathcal{P}^\nu, \kappa + \check{\kappa}].
\end{equation}
By symmetry and the eigen equation $(\Gamma p_\nu)_i = \sum_j \kappa_{ij} p_{\nu,j} = \ell_\nu p_{\nu,i}$, we have
\begin{align*}
  \sum_{i,j,i',j'} \mathcal{P}^\nu_{iji'j'} \kappa_{ii'} \kappa_{jj'}
  = \sum_{i,j,i',j'} \mathcal{P}^\nu_{iji'j'} \kappa_{ij'} \kappa_{ji'}
  = \sum_{i,j} p_{\nu,i} p_{\nu,j} (\Gamma p_\nu)_i (\Gamma p_\nu)_j
  = \ell_\nu^2 \sum_{i,j} (p_{\nu,i} p_{\nu,j})^2
  = \ell_\nu^2.
\end{align*}
Therefore, the first sum in \eqref{three} reduces to $2 \dot \rho_\nu \ell_\nu^2$,
and so we recover formula \eqref{sigma_thm} of Theorem \ref{thm_main}.

\section{Proofs of the eigenvector results} \label{arguments_eigenvec}
\label{sec_eigenvector}

We now derive the main eigenvector results, presented in Theorem \ref{thm_main_eigenvec}, as well as Theorem~\ref{thm_main_eigenvec_subcritical}-$(ii)$.  


\subsection{Eigenvector inconsistency (Theorem~\ref{thm_main_eigenvec}-$(i)$)}   \label{sec:EvecInconsistency}
The convergence result of Theorem~\ref{thm_main_eigenvec}-$(i)$ follows from two facts: $a_\nu \toas p_\nu$ and $Q_\nu \toas c(\rho_{\nu}) \Gamma$, which will be shown below. Once these facts are established, from \eqref{av_quad_form}, 
\begin{equation*} 
\| \hat p_\nu \|^{-2} \toas p_\nu^T (I_m + c(\rho_{\nu}) \Gamma ) p_\nu = 1 + c(\rho_{\nu}) \ell_{\nu} = \frac{\rho_{\nu}}{\ell_{\nu} \dot \rho_\nu} ,
\end{equation*}
which leads to
\begin{equation*} 
{\rm a.s. \, lim \,} \langle  \hat {\mathfrak p}_\nu,  {\mathfrak p}_\nu  \rangle^2 =
{\rm a.s. \, lim \,} \langle  \hat p_\nu,  p_\nu  \rangle^2 =
{\rm a.s. \, lim \,} \| \hat p_\nu \|^{2} = \frac{\ell_{\nu} \dot \rho_\nu}{\rho_{\nu}} .
\end{equation*}


\medskip
\textbf{Proof of $a_\nu \toas p_\nu$.} \ 
This is a direct consequence of \eqref{eq:anu-decomp} and
\begin{equation*} 
D_\nu = K(\rho_{\nu n}) - (\rho_{\nu n}/\ell_{\nu}) \Gamma + K(\hat \ell_{\nu}) - K(\rho_{\nu n}) \toas 0,
\end{equation*}
which follows from \eqref{Kconv1}, \eqref{eq:delK-firstorder}, and the fact that $\hat \ell_{\nu} - \rho_{\nu n} \toas 0$, given in \eqref{eq:SuperSpkEVal}.


\medskip
\textbf{Proof of $Q_\nu \toas c(\rho_{\nu}) \Gamma$.} \
With $\check Z(t) = (t I_p - R_{22})^{-1}$, we have   
\begin{equation*}
Q_\nu = R_{12} \check Z^2(\rho_{\nu}) R_{21} + R_{12} [ \check Z^2(\hat \ell_{\nu}) - \check Z^2(\rho_{\nu}) ] R_{21} \triangleq Q_{\nu 1} + Q_{\nu 2} .
\end{equation*}
Rewrite $Q_{\nu 1} = n^{-1} \bar X_1 \check B_{n1} \bar X_1^T$, with $\check B_{n1} = n^{-1} \bar X_2^T \check Z^2(\rho_{\nu}) \bar X_2$.  On the high probability event $J_{n \epsilon 1} = \{ \mu_1 \leq b_\gamma + \epsilon \}$, with $\epsilon > 0$ such that $\rho_\nu - b_\gamma \geq 2 \epsilon$, it is easily established that $\| \check B_{n1} \|$ is bounded, and consequently that $\| \check B_{n1} \| = O_{\rm a.s.}(1)$. Hence, Lemma~\ref{lemma_as} can be applied to $Q_{\nu 1}$.  Moreover, noting that
\begin{equation*}
n^{-1} \tr \check B_{n1} =   n^{-1} \tr B_{n 1} ( \rho_\nu, \rho_\nu )  \,  
\end{equation*}
with $B_{n 1}$ defined in \eqref{Bnr_def}, from \eqref{int_law_conv},
\begin{equation*}
n^{-1} \tr \check B_{n1}  \toas \int x (\rho_{\nu}-x)^{-2} {\sf F}_\gamma(dx) = c(\rho_{\nu})  \; .
\end{equation*}
This and Lemma~\ref{lemma_as} imply that $Q_{\nu 1} \toas c(\rho_{\nu}) \Gamma$.   

It remains to show $Q_{\nu 2} \toas 0$. Using a variant of the resolvent identity, i.e.,  $A^{-2} - B^{-2} = - A^{-2} (A^2 - B^2) B^{-2}$ for square invertible $A$ and $B$, we rewrite
\begin{equation*}
Q_{\nu 2} = - 2 (\hat \ell_{\nu} - \rho_{\nu}) n^{-1} \bar X_1 \check B_{n2} \bar X_1^T , 
\end{equation*}
with $\check B_{n2} =  n^{-1} \bar X_2^T \check Z^2(\hat \ell_{\nu}) \left[  \frac12 (\hat \ell_{\nu} + \rho_{\nu} ) I - R_{22} \right] \check Z^2(\rho_{\nu})  \bar X_2$. Working on the high probability event $J_{n\epsilon}$, it can be verified that $\| \check B_{n2} \| = O_{\rm a.s.}(1)$. Thus, Lemma \ref{lemma_as}  together with  \eqref{int_law_conv} imply that $n^{-1} \bar X_1 \check B_{n2} \bar X_1^T = O_{\rm a.s.}(1)$. Since $\hat \ell_{\nu} \toas \rho_{\nu}$, we conclude that $Q_{\nu 2} \toas 0$.


\subsection{Eigenvector fluctuations (Theorem~\ref{thm_main_eigenvec}-$(ii)$)}  \label{sec:EvecFluc}


Again, we use the key expansion \eqref{eq:anu-decomp}.
Since $\| r_\nu \| = O(\| D_\nu \|^2) = O_p(n^{-1})$ from \eqref{eq:D_nu_bound}, we have
\begin{equation*}
\sqrt{n} ( a_\nu  - p_\nu) =  - {\cal R}_{\nu n} \sqrt{n} D_\nu p_\nu + o_p(1).
\end{equation*}  
Furthermore, using a similar decomposition as in the derivation of \eqref{eq:eigenvalue_fluc_proof2},
\begin{align*}
	\sqrt{n} D_\nu &= \sqrt{n} \, [ K(\hat \ell_{\nu}) - K(\rho_{\nu n}) ] + \sqrt{n} \, [ K(\rho_{\nu n}) - K_0(\rho_{\nu n},\gamma_n) ] \\
	&= W_n(\rho_{\nu n}) - \sqrt{n} (\hat \ell_{\nu} - \rho_{\nu n}) c(\rho_{\nu}) \Gamma + o_p(1) ,
\end{align*}
where we have used \eqref{eq:delK-firstorder}, \eqref{eq:Wdecomp}, and \eqref{eq:Gtight}, \eqref{eq:evaltight} of Lemma \ref{lem:tightness}.
With this, and noting that ${\cal R}_{\nu n} \Gamma p_\nu = \ell_\nu {\cal R}_{\nu n} p_\nu = 0$ from the definition of ${\cal R}_{\nu n}$ in \eqref{eq:anu-decomp}, we have 
\begin{equation*}
\sqrt{n} ( a_\nu  - p_\nu) =  - {\cal R}_{\nu n}  W_n(\rho_{\nu n}) p_\nu + o_p(1),
\end{equation*}  
or equivalently,
\begin{equation*}
\sqrt{n} ( P^T a_\nu  - e_\nu) =  - \tilde {\cal R}_{\nu n}  \tilde W_n(\rho_{\nu n}) e_\nu + o_p(1),
\end{equation*}
where
\begin{equation*} 
\tilde {\cal R}_{\nu n} = \frac{\ell_{\nu}}{\rho_{\nu n}} \sum_{k \neq \nu}^{m} (\ell_k - \ell_{\nu})^{-1} e_k e_k^T,
\qquad
\tilde W_n(\rho_{\nu n}) = P^T W_n(\rho_{\nu n}) P.
\end{equation*}
The CLT for $P^T a_\nu$ now follows from Proposition~\ref{prop:CLT_W_n}. In particular, 
\begin{equation*}
\sqrt{n} (P^T a_\nu - e_\nu)  \xrightarrow{\cal D}  \tilde {\cal R}_\nu  w_\nu  \sim   N(0,\Sigma_\nu) ,
\end{equation*}
where $\tilde {\cal R}_\nu = (\ell_\nu/\rho_\nu) \mathcal{D}_\nu$,
recall \eqref{Dnudef}, and
$w_\nu = P^T W^{\nu} p_\nu$ with $W^{\nu}$ defined in
Proposition~\ref{prop:CLT_W_n}.
The covariance matrix
$\Sigma_{\nu} = \tilde {\cal R}_\nu \E [w_\nu w_\nu^T]  \tilde {\cal
  R}_\nu = \mathcal{D}_\nu \tilde{\Sigma}_\nu \mathcal{D}_\nu$
with $\tilde \Sigma_{\nu} = (\ell_\nu/\rho_\nu)^2 \E [w_\nu
w_\nu^T]$.
The $k$th component of $w_\nu$ is given by
$w_\nu (k) = p_k^T W^{\nu}
p_\nu = \sum_{i,j} p_{k,i}  W_{i j}^\nu p_{\nu,j}$ 
and, therefore, 
\begin{equation}  \label{cov_tilde}
  \tilde \Sigma_{\nu, k l}
  = \sum_{i,j,i',j'} p_{k,i} p_{\nu,j} p_{l,i'} p_{\nu,j'} \,\, (\ell_\nu/\rho_\nu)^2{\rm Cov}[W_{i j}^\nu, W_{i' j'}^\nu] \,\, .
\end{equation}
Theorem \ref{thm_main_eigenvec}-($ii$) follows after inserting
\eqref{cov_prop:CLT_W_n} in place of ${\rm Cov}[W_{i j}^\nu, W_{i^\prime
  j^\prime}^\nu]$ and noting that, when $k, l \neq \nu$,
\begin{equation*}
\sum_{i,j,i',j'} p_{k,i} p_{\nu,j} p_{l,i'} p_{\nu,j'} 
(\kappa_{i i^\prime} \kappa_{j j^\prime} + \kappa_{ij'} \kappa_{ji'} )
  = p_k^T \Gamma p_l \cdot p_\nu^T \Gamma p_\nu  +
    p_k^T \Gamma p_\nu \cdot p_\nu^T \Gamma p_l
  = \delta_{kl} \ell_k \ell_\nu \,\, .
\end{equation*}

\subsection{Eigenvector inconsistency in the subcritical case (Theorem~\ref{thm_main_eigenvec_subcritical}-$(ii)$)} \label{sec_eigenvector_subcritical}

From \eqref{av_quad_form} and \eqref{eig_proj_factored}, it suffices to show that $a_\nu^T  Q_\nu a_\nu \toas \infty$ for Theorem~\ref{thm_main_eigenvec_subcritical}-$(ii)$ to hold.
We will establish this by showing that $\lambda_{\rm \min}(Q_\nu) \toas \infty$. 
The approach uses a regularized version of $Q_\nu$
\begin{equation*}
Q_{\nu \epsilon}(t) = R_{12}[(t I_p - R_{22})^2 + \epsilon^2 I_p]^{-1} R_{21},
\end{equation*}
for $\epsilon > 0$. 
Observe that $Q_\nu \succ Q_{\nu \epsilon} (\lhat_\nu)$, so that
\begin{equation*}
\liminf \lmin(Q_\nu)
\geq \liminf \lmin(Q_{\nu \epsilon}(\lhat_\nu))
= \liminf \lmin(Q_{\nu \epsilon}(b_\gamma) + \Delta_{\nu \epsilon}),
\end{equation*}
where $\Delta_{\nu \epsilon} := Q_{\nu \epsilon}(\lhat_\nu)- Q_{\nu \epsilon}(b_\gamma) $ 
(Recall that $\hat{\ell}_\nu \toas b_\gamma$).
We will show that
$\Delta_{\nu \epsilon} \toas 0$,
and
\begin{equation}
\label{eq:qnuep}
Q_{\nu \epsilon}(b_\gamma) \toas
\int x[(b_\gamma-x)^2 + \epsilon^2]^{-1} \ssf_\gamma(dx) \cdot \Gamma
= c_\gamma(\epsilon) \Gamma,
\end{equation}
say. Since $\lmin(\cdot)$ is a continuous function on $m \times m$
matrices, we conclude that
\begin{equation}
\label{eq:cgamma}
\liminf \lmin(Q_\nu) \geq c_\gamma(\epsilon) \lmin(\Gamma),
\end{equation}
and since 
$c_\gamma(\epsilon) \geq  c(b_{\gamma} + \epsilon)$ and $c(b_{\gamma} + \epsilon) \nearrow \infty$
as $\epsilon \searrow
0$ by [JY, Appendix A],
we obtain $\lambda_{\rm \min}(Q_\nu)
\toas \infty$. 
Let us write $Q_{\nu \epsilon}(t) = n^{-1} \bar X_1 \check B_{n \epsilon}(t) \bar X_1$, with
\begin{align*}
\check B_{n \epsilon} (t) & =
n^{-1} \bar X_2^T[(tI_p-n^{-1}\bar X_2 \bar X_2^T)^2+\epsilon^2 I_p]^{-1} \bar X_2 \\
& = H \diag \{ f_\epsilon(\mu_i,t) \} H^T,
\end{align*}
if we write the singular value decomposition of $n^{-1/2} \bar X_2 = V
\mathcal{M}^{1/2} H^T$, with $\mathcal{M}=\diag (\mu_i)_{i=1}^p$, and
define $f_\epsilon(\mu,t) = \mu[(t-\mu)^2+\epsilon^2]^{-1}$.
Evidently $\| \check B_{n \epsilon}(t) \| \leq \epsilon^{-2}\mu_1$ is bounded almost surely.
Thus Lemma \ref{lemma_as} may be applied to $Q_{\nu
	\epsilon}(b_\gamma)$, and since
\begin{equation*}
n^{-1} \tr \check B_{n \epsilon}(b_\gamma) \toas 
 \int f_\epsilon(x, b_\gamma)
 \ssf_\gamma(dx) = c_\gamma(\epsilon)
\end{equation*}
from \eqref{int_law_conv}, our claim \eqref{eq:qnuep} follows. \newline
Consider now $\Delta_{\nu \epsilon}$. 
Fix $a \in \mathbb{R}^m$ such that $\|a\|_2 = 1$, and set $b = n^{-1/2}H^T \bar X_1^T a$.
We have
\begin{equation*}
a^T \Delta_{\nu \epsilon} a
= \sum_{i=1}^p b_i^2 [f_\epsilon(\mu_i,\lhat_\nu) -
f_\epsilon(\mu_i,b_\gamma)]. 
\end{equation*}
Since $|\partial f_{\epsilon}(\mu, t) / \partial t | = |2 \mu (t-\mu)| / [(t-\mu)^2 + \epsilon^2]^2 \leq \mu / \epsilon^3$ for $\mu, \epsilon > 0$ by the arithmetic-mean geometric-mean inequality, we have
\begin{equation*}
| a^T \Delta_{\nu \epsilon} a | 
\leq \mu_1  \epsilon^{-3} |\hat{\ell}_{\nu} - b_{\gamma}| \cdot \|b\|_2^2
 = \mu_1  \epsilon^{-3} |\hat{\ell}_{\nu} - b_{\gamma}| a^T R_{11} a 
\leq \mu_1  \epsilon^{-3} |\hat{\ell}_{\nu} - b_{\gamma}| \lhat_1
\overset{\rm a.s.}{\rightarrow}  0,
\end{equation*}
from Cauchy's interlacing inequality for eigenvalues of symmetric matrices, Theorem~\ref{thm_main}-$(i)$ and Theorem~\ref{thm_main_eigenvec_subcritical}-$(i)$.
Hence $\Delta_{\nu \epsilon} \toas 0$ and the proof of
\eqref{eq:cgamma} and hence of Theorem~\ref{thm_main_eigenvec_subcritical}-$(ii)$ is complete.

\section{Proofs of asymptotic properties of normalized bilinear forms}
\label{sec_foundations}


\subsection{Proof of Lemma \ref{lemma_BS_bound_bilinear} (Trace Lemma)}
\label{sec_foundations_bound}



Lemma \ref{lemma_BS_bound_bilinear} is established by using truncation
arguments, similar to \citet[Lemma~5]{Gao14},  but adapted to
bilinear forms instead of quadratic forms.  Also, in contrast to
that result, we do not consider data that is centered with
the sample mean. 
			
Let $ {\cal C}_s $ denote a constant depending only on $s$, with different instances not necessarily identical.
Define the events ${\cal E}_n^x \triangleq \left\{ \left| n^{-1} \| x \|^2  - 1  \right| \leq \epsilon \right\}$ and ${\cal E}_n^y \triangleq \left\{ \left| n^{-1} \| y \|^2  - 1  \right| \leq \epsilon \right\}$, for some $\epsilon \in (0, 1/2)$, and use $\bar {\cal E}_n^x$, $\bar {\cal E}_n^y$ to denote their complements.
	Using Markov's inequality and Burkholder inequalities for sums of martingale difference sequences \cite[Lemmas~2.13]{BaiSilverstein}, we have, for any $s \geq 1$,
	\begin{align} \label{eq:Burkholder_ineq}
	\Pr \, [ \bar {\cal E}_n^x ]  
	& \leq \epsilon^{-s} \, \E \left| n^{-1} \|x\|^2 - 1 \right|^s = (n\epsilon)^{-s} \, \E \left| \sum_{i=1}^n (x_i^2 - 1) \right|^s \notag \\
	& \leq {\cal C}_s (n \epsilon)^{-s} \left[ \left(  \sum_{i=1}^n \E | x_i^2-1  |^2 \right)^{s/2} + \sum_{i=1}^n \E | x_i^2-1 |^s   \right] 
	= O \left( n^{-s/2} \nu_4^{s/2} + n^{-s+1} \nu_{2s}  \right),
	\end{align}
	and a bound of the same order for $\Pr \, [ \bar {\cal E}_n^y ]$, for the same reason.
	Now define ${\cal E}_n = {\cal E}_n^x \cap {\cal E}_n^y$ and its complement $\bar {\cal E}_n$.
	Then $\Pr \, [ \bar {\cal E}_n ] \leq  \Pr \, [ \bar {\cal E}_n^x ] + \Pr \, [ \bar {\cal E}_n^y ] = O \left( n^{-s/2} \nu_4^{s/2} + n^{-s+1} \nu_{2s}  \right)$ by \eqref{eq:Burkholder_ineq}.
	Also, since $1 = {\bb 1}_{{\cal E}_n} + {\bb 1}_{\bar {\cal E}_n}$(recall that ${\bb 1}_{\rm A}$ denotes the indicator function on set ${\rm A}$),
	we have
	\begin{equation} 	\label{decomp11}
	 \E \left| n^{-1} \bar x^T B \bar y - \rho n^{-1} \tr B \right|^s 
	 = \E \left| n^{-1} \bar x^T B \bar y - \rho n^{-1} \tr B  \right|^s {\bb 1}_{{\cal E}_n }
	   + \E \left| n^{-1} \bar x^T B \bar y - \rho n^{-1} \tr B  \right|^s {\bb 1}_{\bar {\cal E}_n }. 
	\end{equation}
	We now bound the two terms on the right hand side of \eqref{decomp11}. For the second term, from $|n^{-1} \bar x^T B \bar y - \rho n^{-1} \tr B| \leq 2 \|B\|$ and \eqref{eq:Burkholder_ineq}, we have
	\begin{equation*}
	\E \left| n^{-1} \bar x^T B \bar y - \rho n^{-1} \tr B  \right|^s {\bb 1}_{\bar {\cal E}_n }   \leq 2^s \|B\|^s \Pr \, [ \bar {\cal E}_n ]  = \|B\|^s O \left( n^{-s/2} \nu_4^{s/2} + n^{-s+1} \nu_{2s}  \right).
	\end{equation*}
	For the first term in \eqref{decomp11}, use the decomposition
	\begin{equation*}
	n^{-1} \bar x^T B \bar y - \rho n^{-1} \tr B  = \frac{1}{\| x \| \| y \|} \left[ x^T B y - \rho \tr B \right] + \frac{\rho \tr B}{\| x \| \| y \|} \left[ 1 - n^{-1} \| x \| \| y \|  \right] \triangleq a_1 + a_2,
	\end{equation*}
	and the triangle inequality to write
	\begin{equation*}
	\E \left| n^{-1} \bar x^T B \bar y - \rho n^{-1} \tr B \right|^s {\bb 1}_{{\cal E}_n } 
	 \leq {\cal C}_s \left( \E |a_1|^s {\bb 1}_{ {\cal E}_n}  + \E |a_2|^s {\bb 1}_{{\cal E}_n } \right). 
	\end{equation*}
	Noting that $\epsilon \in (0, 1/2)$, $\| x \|^2 \geq n/2$ and $\| y \|^2 \geq n/2$ on ${\cal E}_n$, so that 
	\begin{equation*}
	 \E |a_1|^s {\bb 1}_{ {\cal E}_n}  
	\leq 2^s n^{-s} \E \left| x^T B y - \rho \tr B \right|^s
	\leq {\cal C}_s n^{-s} \left[ \nu_{2s} \tr B^s + ( \nu_4 \tr B^2)^{s/2}  \right], 
	\end{equation*}
	where the last inequality follows from [JY, Lemma 4].
	For $a_2$, for the same reasons and $| \rho| \leq 1$,
	\begin{equation} \label{a2_bound}
	 \E |a_2|^s {\bb 1}_{ {\cal E}_n} 
	 \leq  2^s \left(n^{-1} \tr B \right)^s \E \left| 1 - n^{-1} \|x \| \|y \| \right|^s  {\bb 1}_{{\cal E}_n} \; 
	 \leq 2^s \| B \|^s \E \left| 1 - n^{-1} \|x \| \|y \| \right|^s {\bb 1}_{{\cal E}_n} .   
	\end{equation}
		We now show that $ \E \left| 1 - n^{-1} \|x \| \|y \| \right|^s {\bb 1}_{{\cal E}_n}  = O ( n^{-s/2} \nu_4^{s/2} + n^{-s+1} \nu_{2s}  )$. 
	Note that
	\begin{equation*}
	\left| 1 - n^{-1} \|x \| \|y \| \right| 
	\leq  n^{-1/2} \|y \| \left| n^{-1/2} \|x \| -  1\right|  + \left| n^{-1/2} \|y \| - 1 \right|,
	\end{equation*}
	and that, on ${\cal E}_n$ and with $\epsilon \in (0, 1/2)$, we have $n^{-1/2} \|y \| \leq \sqrt{3/2}$.
	Therefore,
	\begin{align*}
	\E \left| 1 - n^{-1} \|x \| \|y \|  \right|^s {\bb 1}_{{\cal E}_n} 
	&\leq  {\cal C}_s \left[ 
	\E \left| n^{-1/2} \|x \| -  1\right|^s    + \E \left| n^{-1/2} \|y \| - 1 \right|^s 
	\right] \\
	&\leq   {\cal C}_s \left[
	\E \left| n^{-1} \|x \|^2 -  1\right|^s  + \E \left| n^{-1} \|y \|^2 - 1 \right|^s
	\right] =  O \left( n^{-s/2} \nu_4^{s/2} + n^{-s+1} \nu_{2s}  \right), 
	\end{align*}
	by  the fact that $| a -1| \leq | a^2 -1|$ for $a \geq 0$, and \eqref{eq:Burkholder_ineq}.
	Combining this bound with \eqref{a2_bound}, we obtain
	\begin{equation*}
	\E |a_2|^s {\bb 1}_{{\cal E}_n} 
	\leq {\cal C}_s \|B \|^s \left( n^{-s/2} \nu_4^{s/2} + n^{-s+1} \nu_{2s}  \right) .
	\end{equation*}
	The proof is complete after combining the different bounds and inserting them back into \eqref{decomp11}.

\subsection{Proof of Proposition \ref{thm_clt} (CLT)} \label{sec:CLTProof}

We use the Cramer-Wold device and show for each $c \in {\mathbb R}^M$ that $c^T Z_n  \xrightarrow{\cal D} N_M(0,c^T D c)$. 
The proof follows a martingale CLT approach of Baik and
Silverstein presented in the Appendix of \cite{capitaine2009}.   While here
normalized data vectors are considered, a parallel treatment for
bilinear forms with un-normalized data is presented in
the supplementary manuscript [JY, Theorem 10].
	
	
	Start with a single bilinear form $\bar x^T B \bar y = \sum_{i,j} \bar x_i b_{ij} \bar y_j$ built from $n$ vectors ($i=1,\ldots,n$) 
	\[ (\bar x_i, \bar y_i) = \left( \hat \sigma_x^{-1}  x_i, \hat \sigma_y^{-1}  y_i  \right)  \in {\mathbb R}^2 , \]
	where the zero mean i.i.d. vectors $(x_i, y_i)$ have covariance
	\begin{equation*}
	{\rm Cov} \begin{pmatrix}
		x_1\\
		y_1
		\end{pmatrix} =
	\begin{pmatrix}
		1 & \rho  \\
		\rho  & 1
		\end{pmatrix}  ,
	\end{equation*}
	and $\hat \sigma_x^2 =  n^{-1} \, \sum_{i=1}^n x_i^2$ and $\hat \sigma_y^2 = n^{-1} \, \sum_{i=1}^n y_i^2$ are the sample variances. Rewrite $\hat \sigma_x^2 = 1 + v_x$ with $v_x = n^{-1}  \, \sum_{i=1}^n x_i^2 - 1 = O_p(n^{-1/2})$, and use the Taylor expansion of $f(a)=1/\sqrt{1+a}$ around $a=0$ to obtain
	\begin{equation} \label{sigmaExp_x}
	{\hat \sigma_x^{-1}} = 1 - \frac12 v_x + o_p(n^{-1/2}) \, , \qquad  	{\hat \sigma_y^{-1}} =  1 - \frac12 v_y + o_p(n^{-1/2})  .
	\end{equation}
	
	The symmetry of B allows the decomposition
	\begin{align} \label{decomposition}
	n^{-1}  \left( \bar x^T B \bar y - \rho  \tr B \right) =
	n^{-1}  \sum_i ( \bar x_i \bar y_i -\rho ) b_{ii} + \bar x_i S_i(\bar y) + \bar y_i S_i(\bar x) ,
	\end{align}
	where $S_i(\bar y) = \sum_{j=1}^{i-1} b_{ij} \bar y_j$.
	The terms in the sum above are not martingale differences, since the data vectors $\bar x$, $\bar y$ are normalized to unit length. In order to apply the Baik-Silverstein argument, we aim at finding an alternative decomposition in terms of the unnormalized data vectors $x, y$; let us see this, term by term. For the first term, using \eqref{sigmaExp_x},
	\begin{align} \label{term1_1}
	n^{-1} \sum_i ( \bar x_i \bar y_i -\rho ) b_{ii}
	&= ({\hat \sigma_x \hat \sigma_y} \, n)^{-1}  \sum_i x_i y_i b_{ii} - n^{-1} \sum_i \rho \, b_{ii}  \notag \\
	&= \left[ 1 - \frac12 (v_x + v_y) + o_p(n^{-1/2})  \right] n^{-1} \sum_i x_i y_i b_{ii} - n^{-1} \sum_i \rho \, b_{ii} \notag \\
	&= n^{-1} \sum_i (x_i y_i - \rho) b_{ii} - \frac12 (v_x + v_y) \, n^{-1} \sum_i x_i y_i b_{ii}  + o_p(n^{-1/2}) . \quad
	\end{align}
	Note that
	\begin{equation*}
	n^{-1} \sum_i x_i y_i b_{ii} =  	n^{-1} \sum_i \rho \, b_{ii}  +  	n^{-1} \sum_i ( x_i  y_i -\rho ) b_{ii} =  	n^{-1} \sum_i \rho \, b_{ii} + O_p(n^{-1/2}) 
	\end{equation*}
	and recall that $v_x$ and $v_y$ are $O_p(n^{-1/2})$ so that, from \eqref{term1_1},
	\begin{align*} 
	n^{-1} \sum_i ( \bar x_i \bar y_i -\rho ) b_{ii}
	&= 	n^{-1} \sum_i (  x_i  y_i -\rho ) b_{ii} - \frac12 \rho \, (n^{-1}  \tr B) \, (x_i^2 + y_i^2 - 2)  + o_p(n^{-1/2}) .
	\end{align*}
	
	For the second term in \eqref{decomposition},
	\begin{equation*}
	n^{-1} \sum_i \bar x_i S_i(\bar y)  = ({\hat \sigma_x \hat \sigma_y})^{-1} \, n^{-1} \sum_i x_i S_i( y) 
	\end{equation*}
	where, from the independence of $x_i y_j$ and $b_{ij}$ and the spectral norm bound of $B$,
	\begin{equation*}
	n^{-1} \sum_i  x_i S_i( y)  = n^{-1} \sum_i  \sum_{j=1}^{i-1}  x_i y_j b_{ij} = O_p(n^{-1/2}) .
	\end{equation*}
	This, along with the fact that $v_x$ and $v_y$ are $O_p(n^{-1/2})$, yield
	\begin{align*}
	n^{-1}  \sum_i \bar x_i S_i(\bar y)  &= \left[ 1 - \frac12 (v_x + v_y) + o_p(n^{-1/2})  \right] \, n^{-1}  \sum_i  x_i S_i( y) \\
	&= n^{-1}  \sum_i  x_i S_i( y) + o_p(n^{-1/2}) .
	\end{align*}
	The third term in \eqref{decomposition}, $n^{-1}  \sum_i  \bar y_i S_i(\bar x)$, is handled similarly.
	Altogether, we have the decomposition
	\begin{equation*}
	n^{-1}  \left( \bar x^T B \bar y - \rho  \tr B \right) =
	n^{-1}  \sum_i (  x_i  y_i -\rho ) b_{ii} - \frac12 \rho \, (n^{-1}  \tr B) \, (x_i^2 + y_i^2 - 2)
	 +  x_i S_i( y) +  y_i S_i( x) + o_p(n^{-1/2}),
	\end{equation*} 
	where we can now apply the Baik-Silverstein argument.
	Specifically, in the setting of the theorem,
	\begin{equation*} 
	c^T Z_n = n^{-1/2} \sum_{l} c_l \left( \bar x_{l \cdot}^T B_n \bar y_{l \cdot} - \rho_l  \tr B_n \right) 
	= \sum_{i=1}^n Z_{di} + Z_{yi} + Z_{xi} + o_p(1)  = \sum_{i=1}^n Z_{ni} + o_p(1) ,
	\end{equation*} 
	where 
	\begin{align} 
	\sqrt{n} \, Z_{di} &= \sum_{l} c_l \left[   (  x_{l i}  y_{l
                             i} -\rho_l ) b_{ii} - \tfrac12 \rho_l \,
                             (n^{-1}  \tr B_n) \, (x_{l i}^2 + y_{l
                             i}^2 - 2)  \right]
                             = \sum_l c_l \, \mathsf{b}_i^T \mathsf{z}_{li} 
                             \notag \\
	\sqrt{n} \, Z_{yi} &=  \sum_{l} c_l    x_{l i} S_i( y_{l \cdot})  \notag \\
	\sqrt{n} \, Z_{xi} &=  \sum_{l} c_l    y_{l i} S_i( x_{l \cdot}) \notag
	\end{align} 
	are martingale differences w.r.t. ${\mathcal F}_{n,i}$, the
        $\sigma$-field generated by $B_n$ and $\{ (x_{lj}, y_{lj}), 1
        \leq l \leq M ,  1 \leq j \leq i \}$.
        In the case of $Z_{d,i}$ we have introduced notation
        \begin{equation*}
          \bar{b} = n^{-1} \tr B_n, \qquad
          \mathsf{b}_i =  \begin{pmatrix}
            b_{ii} \\ - \bar{b}
          \end{pmatrix}, \qquad
          \mathsf{z}_{li} =
          \begin{pmatrix}
            z_{li} - \rho_l \\ w_{li} - \rho_l
          \end{pmatrix},
        \end{equation*}
        recalling that $z_{li} = x_{li} y_{li}$ and $w_{li} = \rho_l (x_{li}^2+y_{li}^2)/2$.

        Let $\E_{i-1}$ denote conditional expectation w.r.t. ${\mathcal F}_{n,i-1}$ and apply the martingale CLT. The limiting variance is found from $v^2 = {\rm plim}\, V_n^2$ with
	\begin{align} \label{Vn} 
	V_n^2 = \sum_{i=1}^n  \E_{i-1} [Z_{ni}^2] = V_{n,dd} + 2 (V_{n,dy} + V_{n,dx}) + V_{n,yy} + V_{n,xx} + 2 V_{n,xy} ,
	\end{align}
	where $V_{n,ab} = \sum_{i=1}^n \E_{i-1} [Z_{ai} Z_{bi}]$ for
        indices $a,b \in \{d,y,x\}$. The terms $Z_{yi}$ and $Z_{xi}$
        are exactly as in [JY] and, therefore
	\begin{align*}
	V_{n,yy} = V_{n,xx} & \xrightarrow{p} \frac12 (\theta - \omega) c^T C^{xx} \circ C^{yy} c \notag \\
	V_{n,xy} & \xrightarrow{p} \frac12 (\theta - \omega) c^T C^{xy} \circ C^{yx} c . 
	\end{align*}
	We only need to compute $V_{n,dd}$, $V_{n,dx}$ and
        $V_{n,dy}$. Start with $V_{n,dd} = \sum_{i=1}^n  \E_{i-1} [Z_{di}^2]$,
	where
        \begin{equation*}
          n \E_{i-1} Z_{di}^2 = \sum_{l , l^\prime} c_l c_{l^\prime}
          \mathsf{b}_i^T \E_{i-1} (\mathsf{z}_{li} \mathsf{z}_{li}^T)
          \mathsf{b}_i, \qquad \text{and} \qquad
            \E_{i-1} (\mathsf{z}_{li} \mathsf{z}_{li}^T)
     =     \begin{pmatrix}
       C^{zz}_{ll'} & C^{zw}_{ll'} \\
       C^{wz}_{ll'} & C^{ww}_{ll'}
     \end{pmatrix}
        \end{equation*}
does not depend on $i$. Consequently
\begin{equation*}
  V_{n,dd} = \sum_{l , l^\prime} c_l c_{l^\prime} \Big[ (n^{-1} \sum_i
  b_{ii}^2) C^{zz}_{ll'} + \bar{b}^2 (C^{ww}_{ll'} - C^{wz}_{ll'} -
  C^{zw}_{ll'}) \Big]
\end{equation*}
and ${\rm plim}\, V_{n,dd} = c^T \left( \omega K_1 + \phi K_2  \right)
c$, with $K_1, K_2$ given by \eqref{Kdef}.

	Turn now to
	\begin{align} \label{Vdy_1}
	V_{n,dy} &= \sum_{i=1}^n  \E_{i-1} [Z_{di} Z_{yi}] 
	\notag \\ &
	= \sum_{l , l^\prime} c_l c_{l^\prime} M_{l, l^\prime}^{(1)} \, \left[ n^{-1} \sum_i b_{ii} S_i(y_{l \cdot}) \right] -  \sum_{l , l^\prime} c_l c_{l^\prime} M_{l, l^\prime}^{(2)} \, \left[  n^{-1} \sum_i S_i(y_{l \cdot}) \right] ,
	\end{align}
	where
	\begin{equation*} 
	M_{l, l^\prime}^{(1)} = \E [(x_l y_l - \rho_{l}) x_{l^\prime}], \quad
	M_{l, l^\prime}^{(2)} = \frac12 \rho_{l} (n^{-1}  \tr B_n)  \E [(x_l^2 + y_l^2 - 2) x_{l^\prime}] .
	\end{equation*}
By [JY, Lemma 12], the two quantities between brackets in
\eqref{Vdy_1} converge to zero 
in probability and, therefore, $V_{n,dy} \xrightarrow{p}
0$. Similarly, $V_{n,dx} \xrightarrow{p} 0$. Combining terms according
to \eqref{Vn} and the previous limits, we finally get $v^2 = c^T D c$,
with $D$ as in the theorem. 

        Finally, we verify the Lindeberg condition.
        An important closure property, shown in
        \citet[Appendix]{capitaine2009}, called [A] below,
        states that, for random variables $X_1, X_2$ and positive $\epsilon$,
	\begin{equation*} 
    \E [ |X_1+X_2|^2 {\bf 1}_{|X_1+X_2|\geq \epsilon}] \leq 4 \left(   \E [ |X_1|^2 {\bf 1}_{|X_1|\geq \epsilon/2}] +  \E [ |X_2|^2 {\bf 1}_{|X_2|\geq \epsilon/2}]   \right) .
	\end{equation*}
	From this, it suffices to establish the Lindeberg condition for the martingale difference sequences
	\begin{align*}
	Z_{l i}^{(1)} = \frac{1}{\sqrt{n}} (  x_{l i}  y_{l i} -\rho_l ) b_{ii} , \quad
	Z_{l i}^{(2)} &=   \frac{\rho_l}{2\sqrt{n}}   (n^{-1}  \tr B_n) \, (x_{l i}^2  - 1) , \quad
	Z_{l i}^{(3)} =  \frac{\rho_l}{2\sqrt{n}}  (n^{-1}  \tr B_n) \, (y_{l i}^2 - 1) , \\
   & Z_{l i}^{(4)} =	 \frac{1}{\sqrt{n}} x_{l i} S_i( y_{l \cdot}) , \quad
   Z_{l i}^{(5)} =	 \frac{1}{\sqrt{n}} y_{l i} S_i( x_{l \cdot}) . \notag
	\end{align*}
	This follows just as in [A];
        recalling that $\|B\| \leq \beta$ we have, for
        $\epsilon > 0$,
        
	\begin{equation*}
    \sum_{i=1}^n \E  [ |Z_{l i}^{(1)}|^2 {\bf 1}_{|Z_{l i}^{(1)}|\geq \epsilon}] \leq \beta^2 \E  [ (  x_{l i}  y_{l i} -\rho_l )^2 {\bf 1}_{|x_{l i}  y_{l i} -\rho_l |\geq \sqrt{n} \epsilon /\beta}]  \to 0
    \end{equation*}	
	as $n \to \infty$, by the dominated convergence theorem. The
        sequences $Z_{l i}^{(2)}$ and $Z_{l i}^{(3)}$ are handled
        analogously, with $(x_{l i}^2  - 1)$ and $(y_{l i}^2  - 1)$ in
        place of $(  x_{l i}  y_{l i} -\rho_l )$. For $Z_{l i}^{(4)}$,
        it can be easily shown, as in [A],
        that $\E [ | S_i( y_l) |^4] = O(1)$, so that $\E [ | Z_{l i}^{(4)} |^4] = O(n^{-2})$ and
    \begin{equation*}
	\sum_{i=1}^n \E  [ |Z_{l i}^{(4)}|^2 {\bf 1}_{|Z_{l i}^{(4)}|\geq \epsilon}] \leq 
	(1/\epsilon^2) \sum_{i=1}^n \E  |Z_{l i}^{(4)}|^4 \to 0
	\end{equation*}	
	as $n \to \infty$. The same reasoning applies to the last sequence $Z_{l i}^{(5)}$, and the verification is complete.

\section*{Acknowledgements}
This work was supported in part by
NIH R01 EB001988 (IMJ, JY), by the Hong Kong RGC General Research Fund 16202918 (MRM, DMJ), and by a Samsung Scholarship (JY).

\newpage
{\appendices

\section{Gaussian particularizations}
 
\subsection{Proof of Corollary~\ref{cor_Gaussian}}
\label{cor_Gaussian_proof}

Refer to definition \eqref{kcheckdef} of $\check{\kappa}$, and
introduce also
\begin{equation*}
  \check{\kappa}_{1,iji'j'} = {\rm Cov}(\psi_{ij},\psi_{i'j'}), \qquad
  \check{\kappa}_{2,iji'j'} = {\rm Cov}(\psi_{ij},\chi_{i'j'}).
\end{equation*}
Note that the centering terms in covariances $\check{\kappa}_{1}$ and
$\check{\kappa}_{2}$ are respectively
$\E \psi_{ij} \E \psi_{i'j'}$ and $\E \psi_{ij} \E \chi_{i'j'}$, and that
both equal $\kappa_{ij} \kappa_{i'j'}$.
Observe also that the sum
\begin{equation*}
  [\mathcal{P}^\nu, \check{\kappa}_{2}]
  = \sum_{i,j,i',j'}  {\cal P}_{i j i' j'}^\nu {\rm Cov}(\psi_{ij},\chi_{i'j'})
\end{equation*}
is unchanged by swapping the indices $(ij)$ with $(i' j')$, so
\begin{equation}   \label{Pkappa}
  [\mathcal{P}^\nu, \check{\kappa}] = [\mathcal{P}^\nu,
  \check{\kappa}_{1}] - 2 [\mathcal{P}^\nu, \check{\kappa}_{2}] \; .
\end{equation}
Using definition \eqref{chipsidef} of $\psi_{ij}$ and setting $\mu_{iji'j'} = \E [ \bar{\xi}_i \bar{\xi}_j \bar{\xi}_{i'}
\bar{\xi}_{j'} ]$, 
\begin{equation*}
  \sum_{i,j,i',j'}  {\cal P}_{i j i' j'}^\nu \E(\psi_{ij} \psi_{i'j'})
  = \frac{1}{4} \sum_{i,j,i',j'}  {\cal P}_{i j i' j'}^\nu
  \kappa_{ij} \kappa_{i'j'} \E(\bar{\xi}_{i}^2 +
  \bar{\xi}_{j}^2)(\bar{\xi}_{i'}^2 + \bar{\xi}_{j'}^2) 
  = \sum_{i,j,i',j'}  {\cal P}_{i j i' j'}^\nu \kappa_{ij}
  \kappa_{i'j'} \mu_{iii'i'}  ,
\end{equation*}
since the latter sum is unaffected by replacing $i$ with $j$, and $i'$
with $j'$. 

Now we can insert the centering in $\check{\kappa}_{1}$, and argue in
a parallel manner for $\check{\kappa}_{2}$, to obtain
\begin{align*}
  [\mathcal{P}^\nu,  \check{\kappa}_{1}]
  & = \sum_{i,j,i',j'}  {\cal P}_{i j i' j'}^\nu \kappa_{ij}
    \kappa_{i'j'} (\mu_{iii'i'} - 1)
    = \ell_\nu^2 \sum_{i, i'} (p_{\nu,i})^2 (p_{\nu,{i'}})^2 (\mu_{iii'i'} - 1),
  \\
  [\mathcal{P}^\nu,  \check{\kappa}_{2}]
  & = \sum_{i,j,i',j'}  {\cal P}_{i j i' j'}^\nu \kappa_{ij}
     (\mu_{iii'j'} - \kappa_{i'j'})
    = \ell_\nu \sum_{i, i', j'} (p_{\nu,i})^2 p_{\nu,i'} p_{\nu,{j'}}  (\mu_{iii'j'} - \kappa_{i'j'}),
\end{align*}
where in each final equality we used the summation device for indices
occurring exactly twice; for example, if $j$ appears twice, $\sum_j \kappa_{ij} p_{\nu,j} = \ell_\nu p_{\nu,i}$.

To this point, no Gaussian assumption was used. 
If the data is Gaussian, $\mu_{i j i^\prime  j^\prime } = \kappa_{i
  j} \kappa_{i^\prime j^\prime} + \kappa_{i i^\prime} \kappa_{j
  j^\prime} + \kappa_{i j^\prime} \kappa_{j i^\prime}$, and in
particular $\mu_{iii'i'}-1 = 2 \kappa_{ii'}^2$, and
$\mu_{iii'j'}-\kappa_{i'j'} = 2 \kappa_{ii'} \kappa_{ij'}$.
We then obtain
\begin{align*}
  [\mathcal{P}^\nu,  \check{\kappa}_{1}]
  & = 2 \ell_\nu^2 \sum_{i, i'} (p_{\nu,i} \kappa_{ii'} p_{\nu,{i'}})^2
    = 2 \ell_\nu^2 \tr (P_{D,\nu} \Gamma P_{D,\nu})^2, \\
    [\mathcal{P}^\nu,  \check{\kappa}_{2}]
  & = 2 \ell_\nu^3 \sum_{i} (p_{\nu,i})^4
    = 2 \ell_\nu^3 \tr (P_{D,\nu}^4),
\end{align*}
and inserting these into \eqref{Pkappa} and then \eqref{sigma_thm}, we
complete the proof.

\subsection{Proof of Corollary~\ref{cor_eigenvec_Gaussian}}
\label{cor_eigenvec_Gaussian_proof}

If the data is Gaussian, $\kappa_{i j i' j'} = 0$ and $\E [ \bar{\xi}_i \bar{\xi}_j \bar{\xi}_{i^\prime}
\bar{\xi}_{j^\prime} ] = \kappa_{i j} \kappa_{i^\prime j^\prime} + \kappa_{i i^\prime} \kappa_{j j^\prime} + \kappa_{i j^\prime} \kappa_{j i^\prime}$. Hence,
\begin{align}
	(\tilde \Sigma_\nu)_{k l }
	&=   \dot \rho_{\nu}^{-1}  \ell_k \ell_{\nu} \,
	\delta_{k,l} +  [\mathcal{P}^{k \nu l \nu},
	\check{\kappa}], \label{Sigma-til_gaussian}
\end{align}
and, from the definition of $\check \kappa$ in \eqref{chipsidef}-\eqref{kcheckdef}, we obtain
\begin{align}
\check{\kappa}_{iji'j'} =  \frac12  \kappa_{i j} \kappa_{i^\prime j^\prime} ( \kappa_{i i^\prime}^2 + \kappa_{j j^\prime}^2 + \kappa_{i j^\prime}^2 + \kappa_{i^\prime j}^2  ) 
	-  \kappa_{i^\prime j^\prime}  ( \kappa_{i i^\prime} \kappa_{j i^\prime} + \kappa_{i j^\prime} \kappa_{j j^\prime} ) - \kappa_{i j}  ( \kappa_{i i^\prime} \kappa_{i j^\prime} + \kappa_{i^\prime j} \kappa_{j j^\prime} ) \,  .
	\label{cov_gaussian_xx}
\end{align}
With this, we evaluate $ [\mathcal{P}^{k \nu l \nu}, \check{\kappa}]$, where we need to deal with terms of two types: involving products such as $\kappa_{i j} \kappa_{i^\prime j^\prime} \kappa_{i i^\prime}^2$ and such as $ \kappa_{i^\prime j^\prime}  \kappa_{i i^\prime} \kappa_{j i^\prime}$. For the first type, we use the same summation device as in Appendix \ref{cor_Gaussian_proof}, i.e., if for example index $j$ occurs exactly twice, $\sum_j \kappa_{ij} p_{\nu,j} = \ell_\nu p_{\nu,i}$, so that
\begin{align*}
 \sum_{i,j,i^\prime,j^\prime} {\cal P}_{i j i^\prime j^\prime}^{k \nu l \nu} \, \kappa_{i j} \kappa_{i^\prime j^\prime} \kappa_{i i^\prime}^2   =
  \ell_{\nu}^2  \sum_{i,i^\prime} p_{k,i} (    p_{\nu,i}  \kappa_{i i^\prime}^2  p_{\nu,i'} ) p_{l,i'}  =   \ell_{\nu}^2 \, p_k^T P_{D,\nu} (\Gamma \circ \Gamma) P_{D,\nu} p_l =
  \ell_{\nu}^2 {\cal Z}_{k l} ,
\end{align*}
where, recall $P_{D,\nu} = {\rm diag}(p_{\nu,1},\ldots,p_{\nu,m})$. For the second type of terms, by the same device,
\begin{align*}
\sum_{i,j,i^\prime,j^\prime} {\cal P}_{i j i^\prime j^\prime}^{k \nu l \nu} \,  \kappa_{i^\prime j^\prime}  \kappa_{i i^\prime} \kappa_{j i^\prime}  =
\ell_{k} \ell_{\nu}^2  \sum_{i'} p_{k,i'}  p_{\nu,i'}^2 p_{l,i'}  =   \ell_k \ell_{\nu}^2 \, p_k^T P_{D,\nu}^2 p_l =
  \ell_k \ell_{\nu}^2 {\cal Y}_{k l}  .
\end{align*}
The rest of terms are evaluated similarly, yielding
\begin{align*}
&\sum_{i,j,i^\prime,j^\prime} {\cal P}_{i j i^\prime j^\prime}^{k \nu l \nu} \, \kappa_{i j} \kappa_{i^\prime j^\prime} \kappa_{j j^\prime}^2 =  \ell_{k} \ell_l {\cal Z}_{k l},  \qquad 
\sum_{i,j,i^\prime,j^\prime} {\cal P}_{i j i^\prime j^\prime}^{k \nu l \nu} \,  \kappa_{i^\prime j^\prime}  \kappa_{i j^\prime} \kappa_{j j^\prime}   =  \ell_k \ell_l \ell_{\nu} {\cal Y}_{k l},   \\
&\sum_{i,j,i^\prime,j^\prime} {\cal P}_{i j i^\prime j^\prime}^{k \nu l \nu} \, \kappa_{i j} \kappa_{i^\prime j^\prime} \kappa_{i j^\prime}^2  =  \ell_{\nu} \ell_l {\cal Z}_{k l} , \qquad
\sum_{i,j,i^\prime,j^\prime} {\cal P}_{i j i^\prime j^\prime}^{k \nu l \nu} \,  \kappa_{i j} 
\kappa_{i i^\prime} \kappa_{i j^\prime}  =  \ell_l \ell_{\nu}^2 {\cal Y}_{k l} , \\
&\sum_{i,j,i^\prime,j^\prime} {\cal P}_{i j i^\prime j^\prime}^{k \nu l \nu} \, \kappa_{i j} \kappa_{i^\prime j^\prime} \kappa_{i^\prime j}^2 =  \ell_{\nu} \ell_k {\cal Z}_{k l}, \qquad \sum_{i,j,i^\prime,j^\prime} {\cal P}_{i j i^\prime j^\prime}^{k \nu l \nu} \,  \kappa_{i j} 
\kappa_{i^\prime j} \kappa_{j j^\prime}  =  \ell_k \ell_l \ell_{\nu} {\cal Y}_{k l} .
\end{align*}
Combining terms according to \eqref{Sigma-til_gaussian}-\eqref{cov_gaussian_xx} leads to the result of Corollary~\ref{cor_eigenvec_Gaussian}.

\section{Proof of Lemma \ref{lem:tightness},
  \eqref{eq:Gtight}--\eqref{eq:evaltight}}  
\label{sec:proof-lemma-ref}

	Tightness of $G_n(g_\rho)$ in \eqref{eq:Gtight}, and \textit{a fortiori} that of
	$n^{-1/2} G_n(g_\rho)$, follows from that of $\widehat{M}_n(z)$ in \citet[Proposition 1]{Gao14}, itself an adaptation of Lemma 1.1 of \citet{basi04} to the sample correlation setting, and the arguments following that Lemma. Note in particular that, with notation $x_r, \mathcal{C}, \bar{\mathcal{C}}$ from \cite{basi04}, a complex contour $\mathcal{C}\cup \bar{\mathcal{C}}$ enclosing the support of ${\sf F}_\gamma$ can be chosen, by taking $b_\gamma < x_r < b_\gamma + 3\epsilon$, such that $| g_\rho(z) |$ is bounded above by a constant for $z \in \mathcal{C}\cup \bar{\mathcal{C}}$. 

	To prove \eqref{eq:Ktight}, from \eqref{eq:Wdecomp} it suffices to show that the matrix valued process $\{ W_n(\rho) \in \mR^{m \times m}, \rho \in I \}$ is uniformly tight.
	Since $m$ stays fixed throughout, we only need to show tightness for each of the scalar processes formed from the matrix entries $e_k^T W_n(\rho) e_l$ on $I$.
	
	Let $\mP_n, \E_n$ denote probability and expectation conditional on the event $E_{n  \epsilon} = \{\mu_1 \leq b_\gamma + \epsilon \}$. 
	We show tightness of $W_n(\rho)$ on $I$ by establishing the moment criterion of \citet[eq. (12.51)]{Bill68}:
	we exhibit $C$ such that for each
	$k,l \leq m$ and
	$\rho, \rho' \in I$,
	\begin{equation*}
	\E_n |e_k^T[W_n(\rho) - W_n(\rho') ]e_l|^2 \leq C(\rho-\rho')^2.
	\end{equation*}
	Write the quadratic form inside the expectation as $\bx^T \check B_n \by - \kappa_{kl} \tr \check B_n$ with
	$\bx = \bar X_1^T e_k$ and $\by = \bar X_1^T e_l$ being the
	$k^{th}$ and $l^{th}$ rows of $\bar X_1$
	and $\check B_n = n^{-1/2}[B_n(\rho)-B_n(\rho')]$.
	Lemma \ref{lemma_BS_bound_bilinear} with $p=2$ yields
	\begin{equation*}
	\E_n |e_k^T[W_n(\rho) - W_n(\rho') ]e_l|^2
	\leq 2 C_2 \nu_4 \E_n [\tr \check B_n^2 + \| n^{1/2} \check B_n \|^2].
	\end{equation*}
	Now $n^{1/2} \check B_n$ has eigenvalues 
	$(\rho' - \rho) \mu_i  (\rho' - \mu_i)^{-1} (\rho - \mu_i)^{-1}$, 
	so that on $E_{n \epsilon}$ we have $\tr \check B_n^2 \leq \| n^{1/2} \check B_n \|^2 \leq C(\rho'-\rho)^2$, which establishes the
	moment condition.
	
	To establish \eqref{eq:evaltight}, we work conditionally on $E_{n \epsilon}$. 
	The tightness just established yields, for given $\epsilon$, a value $M$ for which the event $E_n'$ defined by
	\begin{equation*}
	\sup_{\rho\in I} n^{1/2} \| K(\rho) - K_0(\rho;\gamma_n) \| > \hf M
	\end{equation*}
	has $\mP_n$-probability at most $\epsilon$.
	For all large enough $n$ such that $b_\gamma + 3\epsilon > (1 + \sqrt{\gamma_n})^2$, we combine this with the eigenvalue perturbation bound 
	\begin{equation} \label{eq:evalpbd}
	|\lambda_\nu(\rho) - \lambda_{0 \nu}(\rho)|
	\leq \| K(\rho)-K_0(\rho;\gamma_n) \|
	\end{equation}
	for  $\rho \in I$, 
	where $\lambda_\nu(\rho)$ and $\lambda_{0 \nu}(\rho) = - \rho \ssm(\rho;\gamma_n)\ell_\nu - \rho $ are the
	$\nu^{\text{th}}$ eigenvalues of $K(\rho) - \rho I_m$ and $K_0(\rho;\gamma_n) - \rho I_m$ respectively.
	Observe that $\lambda_{0 \nu}(\rhonn) = 0$ and
	\begin{equation*}
	\partial_\rho \lambda_{0 \nu}(\rho) = - 1 - \ell_{\nu} \int x(\rho-x)^{-2}
	{\sf{F}}_{\gamma}(dx) < -1,
	\end{equation*}
    hence for $\rho_{n\pm} = \rhonn \pm M n^{-1/2}$,
	we have $ \lambda_{0 \nu}(\rho_{n-}) \geq Mn^{-1/2}$ and
	$ \lambda_{0 \nu}(\rho_{n+}) \leq -Mn^{-1/2}$.
	Now \eqref{eq:evalpbd} shows that on event $E_n'^{c}$,
	$\lambda_{\nu}(\rho_{n-}) \geq \hf Mn^{-1/2}$ and 
	$\lambda_{\nu}(\rho_{n+}) \leq - \hf Mn^{-1/2}$.
	Since $\lambda_\nu(\rho)$ is continuous in $\rho$, 
	there exists $\rho_{\nu *} \in (\rho_{\nu -},\rho_{\nu +})$ such that $\lambda_\nu(\rho_{\nu *}) = 0$; note from the Schur complement decomposition
	\begin{equation*} \label{eq:Schur_decomp}
	\det \left( R- \rho I_{m+p} \right) = \det \left( R_{22} - \rho I_p \right)  \, \det \left( K( \rho )- \rho I_m \right) 
	\end{equation*}
    that $\rho_{\nu *}$ is an eigenvalue of $R$. This is almost surely $\lhat_\nu$, since $\lhat_\nu, \rho_{\nu n} \toas \rho_\nu$,  and $\rho_\nu = \rho(\ell_\nu, \gamma)$ is different from the almost sure limit of any eigenvalue of $R$ adjacent to $\lhat_\nu$ (given by \eqref{eq:CovLimits}), because $\ell_\nu$ is simple and supercritical.
	Therefore, we have $\lhat_\nu \in (\rho_{n-}, \rho_{n+} )$, and thus $|\lhat_\nu- \rho_{\nu n}| \leq Mn^{-1/2}$, 
	which proves \eqref{eq:evaltight}.

}

\newpage

\bibliographystyle{myChicago}
\bibliography{refs}

\end{document}